\def\:{\thinspace:\thinspace}
\numberwithin{equation}{section}
\newtheorem{theo}{Theorem}
\newtheorem{lemma}[theo]{Lemma}
\newtheorem{prop}[theo]{Proposition}
\newtheorem{defi}[theo]{Definition}
\theoremstyle{definition}
\newtheorem{rem}[theo]{Remark}
\newtheorem{exa}[theo]{Example}
\DeclareMathOperator{\diag}{diag}
\DeclareMathOperator{\Id}{Id}
\DeclareMathOperator{\Ker}{Ker}
 \def\mG{\mathsf{G}}
 \def\mV{\mathsf{V}}
 \def\mE{\mathsf{E}}
 \def\Gfun{\mathfrak{G}}
 \def\mv{\mathsf{v}}
 \def\me{\mathsf{e}}
 \def\mw{\mathsf{w}}
 \def\mf{\mathsf{f}}
\numberwithin{theo}{section}
\author{Delio Mugnolo}
\address{Delio Mugnolo, Institut f\"ur Analysis, Universit\"at Ulm, 89069 Ulm, Germany}
\email{delio.mugnolo@uni-ulm.de}
\title{Some remarks on the Krein--von Neumann extension of different Laplacians}
\subjclass[2010]{47D06, 35J25}
\keywords{Quadratic forms, Krein--von Neumann extension, boundary conditions, graphs, quantum graphs, Wentzell boundary conditions}
\thanks{The author is supported by the Land Baden--Württemberg in the framework of the \emph{Juniorprofessorenprogramm} -- research project on ``Symmetry methods in quantum graphs''. The author is grateful to Matthias Keller (Jena) for interesting discussions.}
\begin{document}

\begin{abstract}
We discuss the Krein--von Neumann extensions of three Laplacian-type operators -- on discrete graphs, quantum graphs, and domains. In passing we present a class of one-dimensional elliptic operators such that for any $n\in \mathbb N$ infinitely many elements of the class have $n$-dimensional null space.
\end{abstract}
\maketitle

\section{Introduction}

A classical theorem due to Krein (\cite{Kre47}) states that among all extensions of a densely defined, symmetric, positive semidefinite operator $A$ there are two exceptional operators that are extremal with respect to the natural order relation for unbounded self-adjoint operators: These are the Friedrichs (or ``hard'') extension $A_F$ -- the maximal one -- and what is nowadays commonly called Krein--von Neumann (or ``soft'') extension $A_K$ -- the minimal one. 

The Friedrichs extension $A_F$ turns out to agree in many relevant cases with the realization with Dirichlet boundary conditions.  The Krein--von Neumann realization $A_K$, however, is more delicate and in several respects less natural. This is partly due the null space of $A_K$, which by construction contains the null space of $A^*$ and is a such large -- indeed, in many cases even infinite dimensional. This feature typically jeopardizes the positivity of the generated semigroup and the Weyl asymptotics of its eigenvalues -- at least in their naivest form. In particular, the abstract Cauchy problem associated with $A_K$ may not be associated with a Markov process, even if the one associated with $A_F$ is. If one thinks of $A$ as a Laplacian, this is perhaps surprising in view of the properties commonly displayed by diffusion semigroups -- but is well in accordance with typical properties of Krein--von Neumann extensions, cf.~\cite[\S~2.3]{Fuk80}.

The theory of Krein--von Neumann extensions flourished in the 1980s (\cite{AloSim80,Fuk80,Gru83}). Recently the interest in the Krein--von Neumann extension, and in particular its associated boundary conditions, has arisen again, see e.g.~\cite{BroChr05,MakTse07,AshGesMit10,Zem11,Gru12}: An overview of recent results on these extensions and their connections with other problems in mathematical physics, along with a comprehensive list of references, can also be found in the survey article~\cite{AshGesMit13}.  

In this note short we recall some elementary features of this theory and apply them to discuss some properties of the Krein--von Neumann extensions of three Laplacian-type operators on network-like structures.

\section{General setting}

A partial order on the set of self-adjoint operators on a Hilbert space can be introduced as follows. 

\begin{defi}\label{defi:above}
Let $H$ be a Hilbert space. Let $a_1,a_2$ be two symmetric, bounded sesquilinear forms with domains $V_1,V_2$ that are both elliptic with respect to $H$. Denote by $A_1,A_2$ the associated operators on $H$. If $V_2\subset V_1$ and $a_1(x,x)\le a_2(u,u)$ for all $u\in V_2$, then $A_1$ is said to be \emph{smaller than or equal to} $A_2$ and one writes $A_1\le A_2$.
\end{defi}

Here and in the following we are adopting the terminology of~\cite[\S~VI.3.2]{DauLio88} to study linear operators on $H$; in particular, the positive semidefinite operator $A$ associated with a symmetric, bounded, elliptic sesquilinear form $a$ with domain $V$ is defined as usual as
\begin{equation}\label{eq:opassoc}
\begin{split}
{\rm Dom}(A)&:=\{u\in V:\exists v\in H:(v|w)_H=a(u,w)\ \forall w\in V\},\\
Af&:=g.
\end{split}\end{equation}
This operator is uniquely determined in view of the Lax--Milgram Lemma.

\begin{exa}\label{exa:special}
Let $\Omega\subset \mathbb R^d$ be a bounded open domain with 
Lipschitz boundary. Define $-\Delta_N$ (resp., $-\Delta_D$) as the operator acting on $L^2(\Omega)$ associated in the sense of~\eqref{eq:opassoc} with the form
\[
a:(u,v)\mapsto\int_\Omega \nabla u\nabla\bar v
\]
with domain
\[
V_N:=H^1(\Omega)\qquad \hbox{(resp., }V_D:=H^1_0(\Omega)\hbox{).} 
\]
Thus, $\Delta_N,\Delta_D$ are variationally defined realizations of the Laplacian -- indeed, the Laplacian with Neumann and Dirichlet boundary conditions, respectively. 

More generally, if $q\in \mathbb R$, then the symmetric, bounded sesquilinear form
\[
a_q:(u,v)\mapsto\int_\Omega \nabla u\nabla\bar v+q \int_{\partial\Omega} u_{|\partial\Omega}\bar{v}_{|\partial\Omega},\qquad u,v\in V_q:=H^1(\Omega).
\]
is elliptic with respect to $L^2(\Omega)$. The associated operator is the (variationally defined) operator $-\Delta_q$ -- we denote by $\Delta_q$ the Laplacian with Robin boundary conditions
\[
\frac{\partial u}{\partial \nu}+qu=0 \qquad \hbox{on }\partial \Omega.
\]
Then 
\[
-\Delta_{-q}\le -\Delta_N\le -\Delta_q\le -\Delta_D
\]
in the sense of Definition~\ref{defi:above}, whenever $0\le q$.

Also lying between $-\Delta_N,-\Delta_D$ -- but not comparable with the Robin Laplacians -- is any
operator $-\Delta_\omega$ associated with the form
\[
a_\omega:(u,v)\mapsto\int_\Omega \nabla u\nabla\bar v,\qquad u,v\in V_\omega:=\{w\in H^1(\Omega):w_{|\omega}=0\},
\]
whenever $\partial \Omega$ consists of two disjoint closed sets $\omega$ and $\partial\Omega\setminus \omega$ -- think e.g.\ of an annulus.
(In other words, $\Delta_\omega$ is the Laplacian with Dirichlet boundary conditions on a part $\omega$ of the boundary $ \partial\Omega$ of $\Omega$ and Neumann boundary conditions elsewhere.)

Now, $-\Delta_{q}$ is positive semidefinite if and only if $q\ge 0$, since
\[
u\mapsto \int_\Omega |\nabla u|^2
\]
does not define an equivalent norm on $H^1(\Omega)$. However, $-\Delta_N$ (i.e., $-\Delta_q$ for $q=0$) is in general not minimal (in the sense of Definition~\ref{defi:above}) among the positive semidefinite, selfadjoint extensions of the Laplacian defined on the space $C^\infty_c(\Omega)$ -- not even if $\Omega=(0,1)$, as we are going to see in Example~\ref{exa-deriv} below. 
\end{exa} 

The following summarizes two results obtained in~\cite{Kre47,AndNis70}, cf.~\cite[Chapters~13--14]{Sch12}.

\begin{theo}\label{thm:Kre47}
Let $A$ be a symmetric, positive semidefinite operator on $H$. Then $A$ has a self-adjoint extension if and only if the associated quadratic form is densely defined in $H$.

In this case, there exist precisely two extensions $A_K,A_F$ of $A$ such that
\begin{itemize}
\item $A_K,A_F$ are self-adjoint and positive semidefinite and
\item \emph{any} other self-adjoint, positive semidefinite extension $\tilde{A}$ of $A$ satisfies
\[
A_K\le \tilde{A}\le A_F.
\]
\end{itemize}
\end{theo}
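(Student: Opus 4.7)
The plan is to tackle the three assertions of the theorem in sequence: the existence criterion, the construction of the Friedrichs extension $A_F$, and the construction together with the extremality of the Krein--von Neumann extension $A_K$. Throughout I would exploit the bijection between densely defined, closed, positive semidefinite symmetric forms on $H$ and positive semidefinite self-adjoint operators, as already invoked in~\eqref{eq:opassoc}.

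I would first construct $A_F$ by the Friedrichs procedure. The form $a(u,v):=(Au|v)_H$ is symmetric and positive semidefinite on ${\rm Dom}(A)$, and under the density hypothesis it is closable: if $(u_n)\subset{\rm Dom}(A)$ is Cauchy in the norm $\|\cdot\|_a^2:=a(\cdot,\cdot)+\|\cdot\|_H^2$ and $u_n\to 0$ in $H$, then for each fixed $m$
\[
a(u_n,u_m)=(u_n|Au_m)_H\xrightarrow{n\to\infty} 0,
\]
and combining this with the $a$-Cauchy condition forces $\lim_n a(u_n,u_n)=0$. Completion of $({\rm Dom}(A),\|\cdot\|_a)$ yields a Hilbert space $V_F$ that embeds continuously into $H$, and Lax--Milgram produces the unique self-adjoint operator $A_F\geq 0$ whose form on $V_F$ is $a$. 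Since $V_F\supset {\rm Dom}(A)$ and $A_F$ agrees with $A$ on the latter, $A_F\supset A$. This also settles the non-trivial implication of the existence criterion; the reverse implication follows because any self-adjoint extension of $A$ supplies a densely defined closed form extending $a$.

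For $A_K$ I would follow Krein's original idea and build the form on the \emph{range} of $A$, setting $q(Au,Av):=(u|Av)_H$; positive semidefiniteness of $A$ makes $q$ well-defined and itself positive semidefinite on $R(A):=\{Au:u\in {\rm Dom}(A)\}$. After completing and re-interpreting in $H$, one identifies the resulting form domain as $V_K={\rm Dom}(A_F)+\ker(A^*)$, with $A_K$ acting by $A_K(v+v_0):=A_F v$ for $v\in {\rm Dom}(A_F)$ and $v_0\in \ker(A^*)$. A direct computation then confirms that $A_K$ is self-adjoint, positive semidefinite, and extends $A$.

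For the extremality, let $\tilde A$ be any positive semidefinite self-adjoint extension of $A$, with closed form $\tilde a$ on form domain $\tilde V$. Then $\tilde a\equiv a$ on ${\rm Dom}(A)\subset \tilde V$, and completeness of $\tilde V$ in the form norm together with minimality of the closure of $a$ gives $V_F\subset \tilde V$ with $\tilde a=a_F$ on $V_F$; by Definition~\ref{defi:above} this is $\tilde A\leq A_F$. Dually, the construction of $V_K$ as the maximal closed form-domain extension of $a$ yields $\tilde V\subset V_K$ with $q\leq \tilde a$ there, that is, $A_K\leq \tilde A$. Uniqueness of both extensions then follows from the bijective form--operator correspondence. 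I expect the main obstacle to be the construction of $A_K$: whereas $A_F$ is the textbook Friedrichs object, identifying the closure of $q$ on $R(A)$ concretely as a closed form on $V_F+\ker(A^*)\subset H$ is the delicate step that underlies the whole ``soft'' character of the Krein--von Neumann extension.
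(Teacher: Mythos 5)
The paper offers no proof of this theorem: it is quoted from \cite{Kre47,AndNis70} (cf.\ \cite[Chapters~13--14]{Sch12}), so your sketch can only be measured against the form-theoretic machinery the paper sets up in \eqref{eq:royd}--\eqref{eq:kreinchar2}. Your Friedrichs half -- closability of $a$, the Lax--Milgram step, and the deduction $\tilde A\le A_F$ from $V_F\subset\tilde V$ together with $\tilde a=a_F$ on $V_F$ -- is the standard argument and is sound. The genuine gap is in your construction of $A_K$: the identification of its domain as ${\rm Dom}(A_F)+\Ker(A^*)$ with $A_K(v+v_0):=A_Fv$ is valid only under the stronger hypothesis that $A-\epsilon\Id$ is positive semidefinite for some $\epsilon>0$. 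This is exactly the distinction the paper itself draws between the general formulas \eqref{eq:royd}--\eqref{krein0} and the simplified \eqref{eq:kreinchar2} and Lemma~\ref{lem:kreinchar}. For a merely positive semidefinite $A$ the kernel summand must be cut down to those $w$ for which $\lim_{\lambda\nearrow 0}\left((\lambda\Id+A_F)^{-1}w\mid w\right)$ exists, and the form on that summand is this limit rather than zero. A concrete failure of your recipe: for $A=-\frac{d^2}{dx^2}$ on $C^\infty_c(0,\infty)$ one has $\Ker(A^*)\cap L^2(0,\infty)=\{0\}$, so your formula returns the non-self-adjoint closure $\overline{A}$, whereas the Krein--von Neumann extension is the Neumann realization on the half-line.

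Two further points. First, the lower extremality $A_K\le\tilde A$ is asserted rather than proved: the phrase ``the construction of $V_K$ as the maximal closed form-domain extension of $a$'' presupposes precisely what has to be shown, namely that every closed positive form extending $a$ has domain contained in $V_K$ and dominates $a_K$ there. This is the actual content of Krein's theorem; the usual route is to pass to the bounded operators $(\Id+\tilde A)^{-1}$ and show that they fill an operator interval with endpoints $(\Id+A_F)^{-1}$ and $(\Id+A_K)^{-1}$, which also delivers the uniqueness assertion. Second, your one-line reverse implication for the existence criterion (``any self-adjoint extension supplies a densely defined closed form extending $a$'') does not establish that the form with domain ${\rm Dom}(A)$ is densely defined: density of ${\rm Dom}(\tilde A)$ says nothing about density of the smaller set ${\rm Dom}(A)$. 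That direction is where the analysis of non-densely defined positive operators from \cite{AndNis70} enters, and it cannot be dispatched by the argument you give.
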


The operators $A_K$ and $A_F$ are usually referred to as the \emph{Krein--von Neumann} (or \emph{soft}) \emph{extension} and the \emph{Friedrichs} (or \emph{hard}) \emph{ extension} of $A$, respectively.

\begin{exa}\label{exa-deriv}
Let us revisit the setting of Example~\ref{exa:special} by letting
\[
A:=-\Delta,\qquad {\rm Dom}(A):=C^\infty_c(\Omega),\qquad H:=L^2(\Omega).
\]
We are going to focus on the case 
\[
\Omega=(0,1).
\]
Then the Krein--von Neumann extension $A_K$ of $A$ is known. It is shown in~\cite[Example~5.1]{AloSim80}, cf.\ also~\cite[Example~14.14]{Sch12}, that 
$A_K$ is $-\frac{d^2}{dx^2}$ defined on the space of $H^2(0,1)$-functions with boundary conditions
\begin{equation}\label{eq:bckrein}
u'(1)=u'(0)=u(1)-u(0)\ ;
\end{equation}
equivalently, $A_K$ is the operator associated with the symmetric, bounded, elliptic sesquilinear form 
\begin{equation}\label{aalonsosim}
H^1(0,1) \ni (u,v)\mapsto \int_0^1 u'\overline{v'}\ dx-\left(\begin{pmatrix}1 & -1\\ -1 & 1 \end{pmatrix}\begin{pmatrix}u(1)\\ u(0)\end{pmatrix}\mid \begin{pmatrix}v(1)\\ v(0)\end{pmatrix}\right)\in \mathbb C\ ,
\end{equation}
cf.\ also~\cite{BobMug13}. As in~\cite[\S~5]{CarMug09}, a direct application of the Beurling--Deny conditions then shows that the associated semigroup is neither positive nor $L^\infty$-contractive, since neither of these properties is enjoyed by the semigroup 
\[
\exp\left(t\begin{pmatrix}1 & -1\\ -1 & 1 \end{pmatrix}\right)=\frac12 \begin{pmatrix}1+e^{2t} & 1-e^{2t}\\ 1-e^{2t} & 1+e^{2t} \end{pmatrix},\qquad t\ge 0.
\]
This also holds in an a more general setting, cf.~\cite[\S~2.3]{Fuk80}.

This characterization of the boundary conditions~\eqref{eq:bckrein} for the Krein--von Neumann extension of $A$ on $H=L^2(\Omega)$ for $\Omega=(0,1)$ has a pendant in the case where $\Omega$ is a bounded open domain of $\mathbb R^d$, under some mild assumption on the regularity of $\partial \Omega$, cf.~\cite{AshGesMit10b}.
\end{exa}

The possibility of explicitly describing the Krein--von Neumann extensions of a symmetric, positive semidefinite operator $A$ on a Hilbert space $H$, like in Example~\ref{exa-deriv}, is made possible by an approach based on symmetric forms, cf.~\cite[\S~2.3]{Fuk80}, which we present next for the sake of self-containedness: First of all take the closure of
\[
(u,v)\mapsto (Au|v)_H,\qquad u,v\in {\rm Dom}(A),
\]
to obtain a form $(a_F,V_F)$ (if $A$ is not yet self-adjoint). The associated operator is $A_F$, the Friedrichs extension of $A$. 
Then for all self-adjoint positive semidefinite extensions $\tilde{A}$ of $A$ the direct sum representation
\begin{equation}\label{eq:royd}
\tilde{V}=V_F \oplus \left(\Ker(\Id+A^*)\cap \left\{w\in H: \lim_{\lambda \nearrow 0}\left((\lambda\Id+A_F)^{-1}w\mid w\right) \emph{ exists in $\mathbb R$}\right\} \cap \tilde{V} \right)
\end{equation}
of the associated form domain $\tilde{V}$ holds. Furthermore, the Krein--von Neumann extension $A_K$ of $A$ is the operator associated with the form $a_K$ defined on by
\begin{equation}\label{krein0}
a_K(u,v):=\left\{
\begin{array}{ll}
a_F(u,v),\qquad &\hbox{if }u,v\in V_F,\\
\left(\lim\limits_{\lambda \nearrow 0}(\lambda \Id+A_F)^{-1} u\mid v\right),\qquad &\hbox{if }u,v\in \tilde{V}\ominus V_F,\\
- (u\mid v)_H,\qquad &\hbox{if }u\in V_F,\ v\in  \tilde{V}\ominus V_F,
\end{array}
\right.
\end{equation}
cf.~\cite[Lemma~2.3.2]{Fuk80} and following observations. The space $\tilde{V}$ may be much larger than $V_F$, and in particular it may happen that $\tilde{V}$ is not compactly embedded in $H$ even if $V_F$ is.

\begin{rem}
In the special case discussed in Example~\ref{exa:special},~\eqref{eq:royd} is an abstract version of the so-called \emph{Royden decomposition}, i.e., of the assertion that $H^1(\Omega)$ is the direct sum of $H^1_0(\Omega)$ and the space ${\rm Har}(\Omega)$ of (weakly) harmonic functions. (Observe that ${\rm Har}(\Omega)$ is a closed subspace of $H^1(\Omega)$ of dimension $2$ if $d=1$, and of infinite dimension for $d\ge 2$.)

This identity has been rediscovered again and again in different contexts, see e.g.~ \cite[Lemma~1.2]{Gre87}, \cite[Lemma~2.1]{BatBinDij05}, \cite[Thm.~3.6]{HaeKelLen12}, \cite[Thm.~2.5]{AreEls12}, or~\cite[Notes~I.2]{Woe00} for older references.)
\end{rem}

If additionally 
\[
A-\epsilon \Id \quad \hbox{is positive semidefinite for some }\epsilon>0,
\]
e.g.\ if $-A$ is self-adjoint, dissipative and injective and has compact resolvent, then the above construction can be refined to yield
that the symmetric form on $V_K:=V_F\oplus \Ker(A^*)$ associated with $A_K$ is simply given by
\begin{equation}\label{eq:kreinchar2}
a_K(u,v):=a_F(Pu,Pv)=
\left\{
\begin{array}{ll}
a_F(u,v)\qquad &\hbox{if }u,v\in V_F,\\
0 &\hbox{otherwise},
\end{array}
\right.
\end{equation}
where $P$ is the orthogonal projector of $V_K$ onto $V_F$. Indeed the Krein--von Neumann extension can be also characterized as follows, as a consequence of~\cite{Gru83} and~\cite[\S~2]{AshGesMit10}.

\begin{lemma}\label{lem:kreinchar}
Let $A$ be a symmetric, densely defined operator on $H$. If $A- \epsilon\Id$ is positive semidefinite for some $\epsilon>0$, then among all the self-adjoint positive semidefinite extensions of $A$ there exists exactly one whose domain contains $\Ker(A^*)$: This is precisely the Krein--von Neumann extension $A_K$ of $A$.
\end{lemma}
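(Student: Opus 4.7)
\medskip
\noindent\textbf{Plan.} My approach is to argue existence and uniqueness separately, in both cases exploiting the explicit form representation~\eqref{eq:kreinchar2}, which is available precisely because of the hypothesis $A-\epsilon\Id\geq 0$. For the existence half, I would read off~\eqref{eq:kreinchar2} the identity $a_K(u,v)=a_F(Pu,Pv)$ on $V_K=V_F\oplus \Ker(A^*)$, where $P$ is the projector onto $V_F$ along $\Ker(A^*)$. For $u\in\Ker(A^*)$ one has $Pu=0$, hence $a_K(u,v)=0$ for every $v\in V_K$; by the very definition~\eqref{eq:opassoc} of the operator associated with a form this places $u$ in $\mathrm{Dom}(A_K)$ with $A_K u=0$, so $\Ker(A^*)\subset\mathrm{Dom}(A_K)$.

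For uniqueness, I would take any self-adjoint positive semidefinite extension $\tilde{A}$ of $A$ with $\Ker(A^*)\subset\mathrm{Dom}(\tilde{A})$. Since $\tilde{A}$ is symmetric and extends $A$ we have $\tilde{A}\subset A^*$, and therefore $\tilde{A}u=A^*u=0$ for $u\in\Ker(A^*)$. Theorem~\ref{thm:Kre47} yields $A_K\leq\tilde{A}\leq A_F$, which under Definition~\ref{defi:above} amounts to $V_F\subset V_{\tilde{A}}\subset V_K$; combined with $\Ker(A^*)\subset\mathrm{Dom}(\tilde{A})\subset V_{\tilde{A}}$ and $V_K=V_F+\Ker(A^*)$, this forces $V_{\tilde{A}}=V_K$.

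It then remains to verify that the forms coincide, which I would do piece by piece on $V_K=V_F\oplus\Ker(A^*)$. On $V_F$ the sandwich $a_K(u,u)\leq a_{\tilde{A}}(u,u)\leq a_F(u,u)$ collapses because~\eqref{eq:kreinchar2} makes the outer terms equal (as $Pu=u$). On $\Ker(A^*)\subset\mathrm{Dom}(\tilde{A})$ one has $a_{\tilde{A}}(u,u)=(\tilde{A}u\mid u)_H=0=a_K(u,u)$. Finally, for the mixed terms $u\in V_F$, $v\in\Ker(A^*)$, both sides vanish: $a_{\tilde{A}}(u,v)=\overline{(\tilde{A}v\mid u)_H}=0$ and $a_K(u,v)=a_F(u,0)=0$ by~\eqref{eq:kreinchar2}. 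Polarisation then gives $a_{\tilde{A}}=a_K$ on $V_K=V_{\tilde{A}}$, hence $\tilde{A}=A_K$.

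The main obstacle, such as it is, is bookkeeping rather than genuine difficulty: one has to translate the operator-level hypothesis $\Ker(A^*)\subset\mathrm{Dom}(\tilde{A})$ into the form-level saturation $V_{\tilde{A}}=V_K$ of the sandwich provided by Theorem~\ref{thm:Kre47}, and to notice that the form agreement on $V_F$ is free once one has the explicit expression~\eqref{eq:kreinchar2} at one's disposal -- which is exactly why the strict positivity assumption $A-\epsilon\Id\geq0$ is used.
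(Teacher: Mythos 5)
Your argument is correct, but note that the paper does not actually prove Lemma~\ref{lem:kreinchar}: it is stated as a consequence of the cited works of Grubb and of Ashbaugh--Gesztesy--Mitrea, where it is obtained from Krein's extension theory at the operator level. What you supply instead is a self-contained form-theoretic derivation from the two ingredients the paper has already put on the table, namely the representation~\eqref{eq:kreinchar2} of $a_K$ on $V_K=V_F\oplus\Ker(A^*)$ and the extremality sandwich of Theorem~\ref{thm:Kre47}. Both halves check out: reading $\Ker(A^*)\subset{\rm Dom}(A_K)$ off $a_K(u,\cdot)\equiv 0=(0\mid\cdot)_H$ via~\eqref{eq:opassoc} is exactly right, and the uniqueness step correctly converts $\Ker(A^*)\subset{\rm Dom}(\tilde A)$ plus $V_F\subset V_{\tilde A}\subset V_K$ into $V_{\tilde A}=V_K$ and then identifies the forms piecewise (the sandwich pins down the diagonal on $V_F$, $\tilde A\subset A^*$ kills the form on $\Ker(A^*)$ and the mixed terms, and polarisation finishes). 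The trade-off is that your proof leans entirely on~\eqref{eq:kreinchar2}, which the paper itself asserts without proof as a refinement of the construction~\eqref{krein0}; so within the paper's logic your argument is the natural one and genuinely fills a gap the author left to the references, but it does not replace the external input so much as relocate it. Two cosmetic points: you describe $P$ as the projector onto $V_F$ \emph{along} $\Ker(A^*)$ while the paper takes the orthogonal projector of $V_K$ onto $V_F$ --- harmless, since all you use is $P|_{V_F}=\Id$ and $P|_{\Ker(A^*)}=0$, which is what the case distinction in~\eqref{eq:kreinchar2} encodes; and it would be worth one sentence to justify that $a_{\tilde A}$ and $a_K$ are comparable in the sense of Definition~\ref{defi:above} before invoking the sandwich.
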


\begin{rem}\label{rem:inviewof}
Clearly, $A_K$ leaves $\Ker (A^*)$ and hence $H\ominus \Ker (A^*)$ invariant. Following~\cite[\S~2]{AshGesMit10} we call the part $\tilde{A}_K$ of $A_K$ in $H\ominus \Ker (A^*)$ the \emph{reduced Krein--von Neumann} extension of $A$. It has been proved in~\cite{Gru83,AshGesMit10} that (minus) the reduced Krein--von Neumann Laplacian is unitarily equivalent to a certain realization of the bi-Laplacian that arises in the so-called \emph{buckling problem} of elasticity theory.

The study of the reduced Krein--von Neumann extension was was initiated in~\cite[\S~5]{AloSim80}. The main motivation for this was the observation that $\tilde{A}_K$ has discrete spectrum if so does $A_F$ (remember that the form domain of $A_K$ may still be non-compactly embedded in $H$), and furthermore that the $k$-th eigenvalue of $\tilde{A}_K$ dominates the $k$-th eigenvalue of $A_F$, for each $k\in \mathbb N$.
\end{rem}

In the following sections we discuss different Laplacian-type operators whose Krein--von Neumann extensions seem not to have been considered in the literature so far.

\section{The discrete Laplacian}\label{sec:discretelapl}
Krein--von Neumann extension of matrices have been studied already in~\cite{BroChr05}, in the framework of the theory of Jacobi matrices. In this case we focus on graph Laplacians.

Let $\mG=(\mV,\mE)$ be a possibly infinite simple oriented graph. More precisely,
\begin{itemize}
\item $\mV$ is a set that is at most countable,
\item $\mE\subset \mV\times \mV$ and
\item for any  $\mv,\mw\in \mV$ one has $(\mv,\mv)\not \in \mE$ and $(\mv,\mw)\in \mE\Rightarrow (\mw,\mv)\not\in \mE$.
\end{itemize}
We refer to the elements of $\mV,\mE$ as \emph{nodes} and \emph{edges}, respectively; and to $\mv,\mw$ as the \emph{initial} and \emph{terminal endpoint} of the edge $\me=(\mv,\mw)$, respectively. For each simple oriented graph $\mathcal I=(\iota_{\mv\me})$ we can define the oriented incidence matrix of $\mG$ by
$${\iota}_{\mv \me}:=\left\{
\begin{array}{ll}
+1 & \hbox{if}~\mv~\hbox{is terminal endpoint of } \me,\\
-1 & \hbox{if}~\mv~\hbox{is initial endpoint of } \me,\\
0 & \hbox{otherwise},
\end{array}\right.\qquad \mv\in \mV,\me\in \mE.$$ 
We also assume for simplicity $\mG$ to be connected. 

Consider furthermore a weight function $\rho:\mE\to (0,\infty)$. In view of the known correspondence between $\mathcal I$ and the divergence operator of vector analysis, see e.g.~\cite{RigSalVig97,GraPol10}, the (possibly unbounded) operator
\[
\begin{split}
{\rm Dom}(\mathcal L)&:=c_{00}(\mV),\\
\mathcal L&:=\mathcal I\mathcal R\mathcal I^T,
\end{split}
\]
on $\ell^2(\mV)$ is called the \emph{discrete Laplacian} on $\mG$ with respect to the weight $\rho$: Here $\mathcal R:=\diag(\rho(\me))_{\me\in \mE}$ and $c_{00}(\mV)$ denotes the space of sequences on $\mV$ with finite support. The corresponding symmetric, bounded, elliptic sesquilinear form is 
\[
a:(f,g)\mapsto\left(\mathcal R\mathcal I^T f\mid \mathcal I^T g\right)_{\ell^2(\mE)},\qquad f,g\in c_{00}(\mV),
\]
which is densely defined in $\ell^2(\mV)$: By Theorem~\ref{thm:Kre47} it does have self-adjoint extension(s). The Friedrichs extension is obtained as the operator associated closing up 
\[
(f,g)\mapsto (\mathcal L f|g)_{\ell^2(\mV)},\qquad f,g\in c_{00}(\mV)\ .
\]
The operator $\mathcal L_F$ associated with such a closure has been thoroughly investigated in~\cite{KelLen12,HaeKelLen12}, where it is referred to as ``Dirichlet Laplacian''.

In the following we are always going to assume that
\[
\rho\in \ell^\infty(\mE)\ :
\]
Then clearly $\mathcal L$ is a bounded and hence self-adjoint operator on $\ell^2(\mV)$ provided $\mathcal I$ is a bounded operator from $\ell^2(\mE)$ to $\ell^2(\mV)$: By~\cite[Lemma~4.3]{Mug14} this latter condition is satisfied if  $\mG$ is \emph{uniformly locally finite}, i.e., if $\deg_\rho\in \ell^\infty(\mV),$ where 
\[
\deg_\rho(\mv) :=\sum_{\me\in \mE}|\iota_{\mv\me}|\rho(\me),\qquad \mv\in \mV,
\]
and in particular if $\mV$ is finite. 

But in the general case of $\deg_\rho \not\in \ell^\infty(\mV)$ there may exist several self-adjoint extensions. The maximal domain of the form $a$ is
$\{f\in \ell^2(\mV):a(f,f)<\infty\}$, i.e.,  the discrete Sobolev space
\[
w^{1,2}(\mV):=\{f\in \ell^2(\mV):\mathcal I^T f\in \ell^2(\mE)\},
\]
which is a separable Hilbert space with respect to the inner product
\[
(f\mid g)_{w^{1,2}}:=(f\mid g)_{\ell^2(\mV)}+ (\mathcal I^T f\mid \mathcal I^T g)_{\ell^2(\mE)}.
\]
The associated operator (in the sense of~\eqref{eq:opassoc}) of $a$ with this maximal domain is by~\cite[Thm.~2.2]{HaeKelLen12}
\[
\begin{split}
{\rm Dom}(\tilde{\mathcal L})&:=\{f\in \ell^2(\mV):\mathcal I\mathcal R{\mathcal I}^T\in \ell^2(\mV)\},\\
\tilde{\mathcal L}
&:=\mathcal I\mathcal R\mathcal I^T.
\end{split}
\]


It is known that the closure $w^{1,2}_0(\mV)$ of $c_{00}(\mV)$ in $w^{1,2}(\mV)$ does not necessarily agree with $w^{1,2}(\mV)$, much in analogy to what happens with usual Sobolev spaces on open subsets of the Euclidean space $\mathbb R^d$. By~\cite[Thm.~3.6]{HaeKelLen12}, 
\[
w^{1,2}(\mV)=w^{1,2}_0(\mV)\oplus \Ker(\mathcal L+\Id).
\]


Observe that $\tilde{\mathcal L}$ is a matrix with negative off-diagonal entries, hence one would naively expect any restriction of $\tilde{\mathcal L}$ to generate a positive semigroup. 
A class of self-adjoint extensions of $\mathcal L$ is characterized in~\cite[Thm.~5.2]{HaeKelLen12}, in dependence on the Markov property (or lack thereof) of the generated semigroup. In view of Theorem~\ref{thm:Kre47}, this characterization is possibly not exhaustive, and in particular a concrete example of a weighted graph $\mG$ and a discrete Laplacian (more precisely: of an extension of $\mathcal L$) that generates a non-submarkovian semigroup is presented in~\cite[Example~5.1]{HuaKelMas13}: It is currently not clear whether this operator from~\cite{HuaKelMas13} is the Krein--von Neumann extension of the discrete Laplacian constructed as in~\eqref{krein0}. 

\begin{rem}
In view of~\cite[Prop.~3.8]{Mug14}, $w^{1,2}(\mV)$ and hence $w^{1,2}_0(\mV)$ are compactly embedded in $\ell^2(\mV)$ if for every $\epsilon>0$ there are $\mv\in \mV$ and $r>0$ such that 
\begin{enumerate}[(i)]
\item $B_\rho(\mv,r):=\{\left\{\mw\in \mV:{\rm dist_\rho}(\mv_0,\mw)<r\right\}$ is a finite set and additionally
\item there holds
\[
\sum_{\mw\not\in B_\rho(\mv,r)} |f(\mw)|^2 <\epsilon^2
\]
for all $f$ such that $\|f\|_{w^{1,2}(\mV)}\le 1$.
\end{enumerate}
If these conditions are satisfied, then $\mathcal L_F$ has discrete spectrum, and hence so does the reduced Krein--von Neumann extension $\mathcal L_K$ of $\mathcal L$, cf.\ Remark~\ref{rem:inviewof}.
\end{rem}

\section{The quantum graph Laplacian}
Let $\mG$ be a simple oriented graph as in the previous section. Then, $\mG$ is turned into a \emph{metric} or \emph{quantum graph} $\Gfun$ by identifying each edge $\me$ with an interval $\left(0,\rho(\me)\right)$ and the initial or terminal endpoint $\mv$ of $\me$ with $0$ or $\rho(\me)$, respectively, cf.~\cite[Chapter~3]{Mug14} for a precise definition. One may then define a second derivative operator $\Delta_0$ on the space of smooth functions that have compact support on each interval, i.e.,
\begin{equation}\label{deltamit}
\Delta_0:(u_\me)_{\me\in \mE} \mapsto \left(\frac{d^2 u_\me}{dx^2}\right)_{\me\in \mE},\qquad {\rm Dom}(\Delta_0):=\prod_{\me \in \mE}C^\infty_c(0,\rho(\me)).
\end{equation}
For the sake of simplicity, let us in the following assume $\mG$ to be finite.

There are uncountably many self-adjoint positive semidefinite extensions of $-\Delta_0$ in the Hilbert space
\[
H:=L^2(\Gfun):=\prod_{\me \in \mE}L^2(0,\rho(\me)),
\]
cf.~\cite[\S~1.4]{BerKuc13}, but most of them will not be faithful to the original structure of $\mG$: That is, edges that are adjacent in $\mG$ may not necessarily be assigned any gluing conditions and, on the contrary, non-adjacent edges may possibly be. 

For this reason, let us rather focus on functions satisfying the continuity condition
\begin{equation}\tag{Cc}
u_\me(\mv)=u_\mf(\mv)=:u(\mv),\qquad \hbox{for all }\me,\mf\in \mE_\mv,\; \mv\in \mV,
\end{equation} 
(where $\mE_\mv$ denotes the set of edges one of whose endpoints is $\mv$) or, more formally: 
\begin{equation}\label{eq:moref}
\exists d\in \mathbb C^\mV \hbox{ such that }\left(\mathcal I^{+^T}d\right)_\me=u_\me(\rho(\me))\hbox{ and } \left(\mathcal I^{+^T}d\right)\me=u(0),\ \hbox{for all }\me\in \mE
\end{equation}
where $\mathcal I^+=(\iota^+_{\mv\me})$ and $\mathcal I^-=(\iota^-_{\mv\me})$ are the positive and negative parts of $\mathcal I=(\iota_{\mv\me})$, respectively. 

We thus study the second derivative operator $\Delta_{cont}$ defined as the formal extension of $\Delta_0$ to
\[
{\rm Dom}(\Delta_{cont}):=\left\{u\in \prod_{\me \in \mE}W^{2,2}(0,\rho(\me)): u \hbox{ satisfies $\rm(Cc)$ and }u'_\me(0)=u'_\me(\rho(\me))=0 \hbox{ for each }\me\in \mE\right\}.
\]
It is easy to see that $A:=-\Delta_{cont}$ is symmetric and positive semidefinite.
This domain incorporates ``too many'' boundary conditions, hence the operator $A$ cannot be self-adjoint. However, ${\rm Dom}(\Delta_{cont})$ contains $\prod_{\me\in \mE}C^\infty_c(0,\rho(\me))$, hence $A$ is densely defined and by Theorem~\ref{thm:Kre47} it admits self-adjoint extensions. These can be recovered by abstract extension theory. Admittedly, only a part of all possible self-adjoint extensions of $\Delta_0$ are found in this way, but on the other hand in this way we are sure that the domains of these extensions will contain the continuity condition, hence the connectivity of the graph will be respected.

Closing up the associated form 
\[
a:(u,v)\mapsto (Au|v)_H=(\Delta_{cont} u|v)_{L^2(\Gfun)},\qquad u,v\in {\rm Dom}(\Delta_{cont}),
\]
one finds the sesquilinear form
\[
(u,v)\mapsto \sum_{\me\in\mE}\int_0^{\rho(\me)}u'_\me \bar{v}'_\me,\qquad u,v\in W^{1,2}(\Gfun):=\left\{w\in \prod_{\me \in \mE}W^{1,2}(0,\rho(\me)): w \hbox{ satisfies $\rm(Cc)$}\right\}.
\]
Its associated operator is well-known in the literature: It is the formal extension of $\Delta_0$ to the domain
\[
\left\{w\in \prod_{\me \in \mE}W^{2,2}(0,\rho(\me)): w \hbox{ satisfies $\rm(Cc)$ and~$\rm(Kc)$}\right\},
\]
where
\begin{equation}\tag{Kc}
\partial_\nu u(\mv):=
\sum_{\me\in \mE}\iota^+_{\mv\me}u'_\me (1)-\sum_{\me\in \mE}\iota^-_{\mv\me}u'_\me (0) =0,\qquad \hbox{for all }\mv\in \mV,
\end{equation}
see e.g.~\cite[Lemma~2.3]{KraMugSik07}. (In other words, $(Kc)$ imposes that in each node the total incoming flow agrees with the total outgoing flow.) By construction we obtain the following.

\begin{prop}
The operator $A_F$ defined by
\[
\begin{split}
{\rm Dom}(A_F)&:=\left\{w\in \prod_{\me \in \mE}W^{2,2}(0,\rho(\me)): w \hbox{ satisfies $\rm(Cc)$ and~$\rm(Kc)$}\right\},\\
A_F &:(u_\me)_{\me\in \mE} \mapsto \left(\frac{d^2 u_\me}{dx^2}\right)_{\me\in \mE},
\end{split}
\]
is the Friedrichs extension of $A=-\Delta_{cont}$ on $L^2(\Gfun)$.
\end{prop}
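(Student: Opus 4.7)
The plan is to follow the recipe for the Friedrichs extension recalled in~\eqref{eq:royd}--\eqref{krein0}: close the form $a_0(u,v):=(Au\mid v)_{L^2(\Gfun)}$ on $\mathrm{Dom}(A)=\mathrm{Dom}(\Delta_{cont})$, obtaining a form domain $V_F$, and then identify the operator it generates through~\eqref{eq:opassoc} with the $A_F$ stated in the proposition.

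For $u,v\in\mathrm{Dom}(\Delta_{cont})$, edgewise integration by parts yields
\[
a_0(u,v)=\sum_{\me\in\mE}\int_0^{\rho(\me)} u'_\me\,\bar v'_\me\,dx-\sum_{\me\in\mE}\bigl[u'_\me\,\bar v_\me\bigr]_0^{\rho(\me)},
\]
and the boundary sum vanishes because the condition $u'_\me(0)=u'_\me(\rho(\me))=0$ is built into $\mathrm{Dom}(\Delta_{cont})$. Hence $a_0$ is the restriction to $\mathrm{Dom}(\Delta_{cont})$ of the form $a(u,v):=\sum_\me\int_0^{\rho(\me)} u'_\me\bar v'_\me$ on $W^{1,2}(\Gfun)$, which is already closed in its graph norm. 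It therefore suffices to show that $\mathrm{Dom}(\Delta_{cont})$ is dense in $W^{1,2}(\Gfun)$ with respect to the $W^{1,2}$-norm. This is the only step that requires real work: since $\mG$ is finite, it reduces to the one-dimensional statement that, given $f\in H^1(0,\ell)$ with fixed endpoint values, one can find smooth approximants $f_n$ with $f_n(0)=f(0)$, $f_n(\ell)=f(\ell)$ and $f_n'(0)=f_n'(\ell)=0$. I would prove this by first mollifying $f$ into a smooth $g_\varepsilon$ (adjusting by an affine function to preserve the boundary values) and then subtracting localized corrections of the form $g_\varepsilon'(0)\,x\,\eta(x/\varepsilon)$ near each endpoint; such a correction is $O(\varepsilon^{1/2})$ in the $H^1$-norm and kills the boundary derivative without moving the boundary value. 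Performing this independently on every edge and matching the common vertex values prescribed by~(Cc) produces approximants in $\mathrm{Dom}(\Delta_{cont})$ and yields $V_F=W^{1,2}(\Gfun)$.

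Finally, to identify the operator associated with $a$: an element $u\in W^{1,2}(\Gfun)$ belongs to its domain precisely when $v\mapsto a(u,v)$ extends continuously to $L^2(\Gfun)$. Testing first with $v\in C^\infty_c(0,\rho(\me))$ extended by zero to the rest of $\Gfun$ (which satisfies~(Cc) trivially and vanishes at every vertex) shows that $u''_\me\in L^2(0,\rho(\me))$ with the operator acting as $u\mapsto -u''$ edgewise, hence $u\in\prod_\me W^{2,2}(0,\rho(\me))$. A second integration by parts, together with~(Cc) for $v$ and a regrouping of the boundary terms by vertex, then gives
\[
a(u,v)-\sum_{\me\in\mE}\int_0^{\rho(\me)}(-u''_\me)\,\bar v_\me\,dx=\sum_{\mv\in\mV}\bar v(\mv)\Bigl[\sum_{\me\in\mE}\iota^+_{\mv\me}u'_\me(\rho(\me))-\sum_{\me\in\mE}\iota^-_{\mv\me}u'_\me(0)\Bigr],
\]
and since $v(\mv)$ can be prescribed arbitrarily at each $\mv$ by choosing a continuous tent-like test function, every bracketed quantity must vanish. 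This is precisely~(Kc), so the operator generated by the closed form agrees with $A_F$ as claimed. The main obstacle, as indicated, is the density step: the pointwise vanishing-derivative constraint at every endpoint must be shown not to shrink the closure of $\mathrm{Dom}(\Delta_{cont})$ inside $W^{1,2}(\Gfun)$.
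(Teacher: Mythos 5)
Your proposal is correct and follows essentially the same route as the paper: the paper obtains $A_F$ ``by construction'', i.e.\ by closing the form $(u,v)\mapsto(Au\mid v)_{L^2(\Gfun)}$ on ${\rm Dom}(\Delta_{cont})$ to get the Dirichlet form on $W^{1,2}(\Gfun)$ and then citing \cite[Lemma~2.3]{KraMugSik07} for the identification of the associated operator with the Laplacian under $\rm(Cc)$ and $\rm(Kc)$. You simply spell out the two steps the paper delegates to the construction and to that reference -- the density of ${\rm Dom}(\Delta_{cont})$ in $W^{1,2}(\Gfun)$ (which is indeed the only point needing care, and your endpoint-correction argument works) and the integration-by-parts identification of the Kirchhoff condition.
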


Observe that $A_F$ is not injective -- indeed, its null space coincides with the null space of $\Delta_{cont}$, i.e., with the space of functions that are connected-componentwise constant.

Let us now turn to the Krein--von Neumann extension $A_K$ of $A$, which we might determine by means of~\eqref{krein0}. However, in this specific case it is easier and more enlightening to perform a direct computation. By~\cite[Thm.~1.4.4]{BerKuc13}, all self-adjoint extensions of $A=-\Delta_{cont}$ must satisfy additional boundary conditions
\begin{equation}\label{eq:questo}
\partial_\nu u(\mv)+\Lambda u(\mv)=0,\qquad \hbox{for all }\mv\in \mV,
\end{equation}
where $\Lambda$ is a self-adjoint operator acting on the Hilbert space $\ell^2(\mV)$.
It is not difficult to see that the associated quadratic forms are given by
\begin{equation}\label{eq:aform}
(u,v)\mapsto \sum_{\me\in\mE}\int_0^{\rho(\me)}u_\me' \bar v_\me'\ - \sum_{\mv,\mw\in \mV} \Lambda_{\mv\mw} u(\mv)\bar v(\mw),\qquad u,v\in W^{1,2}(\Gfun).
\end{equation}
Our goal is to find out for which $\Lambda$
\begin{equation}\label{eq:thissum}
\sum_{\me\in\mE}\int_0^{\rho(\me)}|u_\me'|^2 \ge \sum_{\mv,\mw\in \mV} \Lambda_{\mv\mw} u(\mv)\bar u(\mw)\qquad \hbox{for all }u\in W^{1,2}(\Gfun).
\end{equation}
Hölder's inequality yields
\[
\int_\alpha^\beta|w|^2\ge \frac{1}{\beta-\alpha} \left| \int_\alpha^\beta w\right|^2\qquad \hbox{for all }w\in L^2(\alpha,\beta),
\]	
for any two real numbers $\alpha<\beta$, and in view of its optimality the right hand side in~\eqref{eq:thissum} is made as small as possible if
\[
\sum_{\mv,\mw\in \mV} \Lambda_{\mv\mw} u(\mv)\bar u(\mw)=\sum_{\me\in\mE}\frac{1}{\rho(\me)}\left|\int_0^{\rho(\me)}u_\me'\right|^2.
\]
This is the case if and only if $\Lambda=\mathcal I\mathcal R^{-1}\mathcal I^T$\footnote{ We stress that $\mathcal I\mathcal R^{-1}\mathcal I^T$ is the discrete Laplacian of on $\mG$ with respect to the weight $\rho^{-1}$, whereas in Section~\ref{sec:discretelapl} we have considered the discrete Laplacian with respect to the weight $\rho$. We can think of weights $\rho,\rho^{-1}$ as resistances (proportional to a wire's length) and conductances  (inversely proportional to a wire's length), respectively. We need not care about realizations of $\mathcal I\mathcal R^{-1}\mathcal I^T$, since $\mV$ is finite and hence $\mathcal I\mathcal R^{-1}\mathcal I^T$ is bounded by assumption. }. Indeed, for this choice of $\Lambda$ and owing to the Fundamental Theorem of Calculus
\[
\sum_{\mv,\mw\in \mV} \Lambda_{\mv\mw} u(\mv)\bar u(\mw)=\sum_{\substack{\mv,\mw\in \mV\\ \mv\sim\mw}} \frac{1}{\rho\big((\mv,\mw) \big)}|u(\mw)- u(\mv)|^2=\sum_{\me\in\mE}\frac{1}{\rho(\me)}\left|\int_0^{\rho(\me)}u_\me'\right|^2,
\]
where we write $\mv\sim\mw$ whenever either edge $(\mv,\mw)$ or $(\mw,\mv)$ belongs to $\mE$.

Summing up, we have obtained the following.

\begin{prop}
The operator $A_K$ defined by
\[
\begin{split}
{\rm Dom}(A_K)&:=\left\{w\in \prod_{\me \in \mE}W^{2,2}(0,\rho(\me)): w \hbox{ satisfies $\rm(Cc)$ and~$\rm(KNc)$}\right\},\\
A_K&:(u_\me)_{\me\in \mE} \mapsto \left(\frac{d^2 u_\me}{dx^2}\right)_{\me\in \mE},
\end{split}
\]
where
\begin{equation}\tag{KNc}
\partial_\nu u(\mv) =\mathcal I\mathcal R^{-1}\mathcal I^T u(\mv)\qquad \hbox{for all }\mv\in \mV,
\end{equation}
is the Krein--von Neumann extension of $A=-\Delta_{cont}$ on $L^2(\Gfun)$.
\end{prop}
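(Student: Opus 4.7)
The plan is to combine Theorem~\ref{thm:Kre47} with the parametrization, supplied by Theorem~1.4.4 of~\cite{BerKuc13} and the assertion preceding~\eqref{eq:questo}, of the self-adjoint extensions of $A$ that respect~(Cc): these are in bijection with self-adjoint operators $\Lambda$ on $\ell^2(\mV)$, and the associated quadratic forms are precisely the ones in~\eqref{eq:aform}, all sharing the common domain $W^{1,2}(\Gfun)$. Because the form domains coincide, the order of Definition~\ref{defi:above} reduces to the pointwise comparison of the forms on $W^{1,2}(\Gfun)$, and since $\Lambda$ enters~\eqref{eq:aform} with a minus sign, a larger $\Lambda$ yields a smaller operator. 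By Theorem~\ref{thm:Kre47}, the Krein--von Neumann extension $A_K$ therefore corresponds to the \emph{largest} self-adjoint $\Lambda$ for which~\eqref{eq:aform} is positive semidefinite, i.e.\ for which~\eqref{eq:thissum} holds for every $u\in W^{1,2}(\Gfun)$.

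The first half of the identification of this maximal $\Lambda$ is already carried out in the excerpt: the edgewise H\"older estimate
\[
\int_0^{\rho(\me)}|u'_\me|^2\ge \frac{1}{\rho(\me)}\bigl|u_\me(\rho(\me))-u_\me(0)\bigr|^2,
\]
summed over $\me\in\mE$, together with the final computation identifying the right-hand side as $(\mathcal I\mathcal R^{-1}\mathcal I^T u\mid u)_{\ell^2(\mV)}$, shows that $\Lambda_0:=\mathcal I\mathcal R^{-1}\mathcal I^T$ satisfies~\eqref{eq:thissum}. For maximality, suppose $\Lambda$ is self-adjoint and $(\Lambda d\mid d)_{\ell^2(\mV)}>(\Lambda_0 d\mid d)_{\ell^2(\mV)}$ for some $d\in\mathbb C^\mV$. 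I would then build a test function $u\in W^{1,2}(\Gfun)$ affine on each edge $\me=(\mv,\mw)$ with endpoint values $u_\me(0)=d(\mv)$ and $u_\me(\rho(\me))=d(\mw)$; by construction $u$ satisfies~(Cc), and on such a function each edgewise H\"older inequality is an equality, hence
\[
\sum_{\me\in\mE}\int_0^{\rho(\me)}|u'_\me|^2=(\Lambda_0 d\mid d)_{\ell^2(\mV)}<(\Lambda d\mid d)_{\ell^2(\mV)},
\]
which contradicts~\eqref{eq:thissum} for this $\Lambda$. Hence $\Lambda_0$ is indeed the maximum.

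The operator associated with the form $a_{\Lambda_0}$ is then $A_K$; its domain is identified by the standard edgewise integration by parts, exactly as in the discussion preceding~\eqref{eq:questo}, as those $W^{2,2}$-functions on each edge satisfying~(Cc) together with $\partial_\nu u(\mv)=(\Lambda_0 u)(\mv)$ at every $\mv\in\mV$, which is~(KNc). The main conceptual point is the first paragraph---the reduction of the abstract operator order of Definition~\ref{defi:above} to the pointwise order on the parameter $\Lambda$---which hinges on all admissible form domains being equal to $W^{1,2}(\Gfun)$ and is precisely why confining attention to extensions compatible with~(Cc) is so natural here. Once this reduction is in place, the rest of the argument is a sharp H\"older estimate whose equality case is attained by piecewise-affine functions, and I do not foresee any serious obstacle in finishing the verification.
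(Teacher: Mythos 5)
Your proposal follows essentially the same route as the paper: the paper likewise invokes the parametrization of the self-adjoint extensions by self-adjoint matrices $\Lambda$ on $\ell^2(\mV)$ with common form domain $W^{1,2}(\Gfun)$, reduces the Krein--von Neumann property to maximizing $\Lambda$ subject to~\eqref{eq:thissum}, and identifies $\Lambda=\mathcal I\mathcal R^{-1}\mathcal I^T$ via the sharp edgewise H\"older inequality and the Fundamental Theorem of Calculus. Your only addition is to make the maximality step explicit with the edgewise-affine test function attaining equality in H\"older, which the paper leaves implicit under the phrase ``in view of its optimality''; this is a welcome but not substantively different elaboration.
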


(In the trivial case of an unweighted graph that consists only of two adjacent nodes we recover the results in Example~\ref{exa-deriv}, as the matrix that appears in~\eqref{aalonsosim} is precisely the discrete Laplacian of this graph.)

\begin{prop}
The $C_0$-semigroup by $-A_F$ is Markovian, i.e., it is positive and contractive with respect to the $\infty$-norm. The $C_0$-semigroup by $-A_K$ is neither positive, nor contractive with respect to the $\infty$-norm.
\end{prop}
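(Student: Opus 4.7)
The plan is to apply the Beurling--Deny criteria to the symmetric forms $a_F$ and $a_K$, both defined on $V:=W^{1,2}(\Gfun)$. The maps $u\mapsto |u|$ and $u\mapsto (0\vee u)\wedge 1$ preserve the continuity condition $\rm(Cc)$ and act as normal contractions on each factor $W^{1,2}(0,\rho(\me))$, so they send $V$ into itself; consequently only the numerical inequalities on the form values need to be checked.

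For $A_F$, the form reduces to $a_F(u,u)=\sum_{\me\in\mE}\int_0^{\rho(\me)}|u'_\me|^2$, and the edgewise identities $|(|u|)'_\me|=|u'_\me|$ a.e.\ and $|((0\vee u)\wedge 1)'_\me|\le |u'_\me|$ a.e.\ yield at once $a_F(|u|,|u|)\le a_F(u,u)$ and $a_F((0\vee u)\wedge 1,(0\vee u)\wedge 1)\le a_F(u,u)$. Beurling--Deny then gives Markovianity of the semigroup generated by $-A_F$.

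For $A_K$ I would refute positivity and $L^\infty$-contractivity simultaneously by exhibiting an explicit test function. Fix any node $\mv_0\in\mV$, choose $c>0$, and let $u\in V$ be the unique piecewise-affine function with $u(\mv_0)=-c$ and $u(\mv)=c$ for $\mv\ne \mv_0$. On each edge $\me\in\mE_{\mv_0}$ a direct calculation gives $\int_0^{\rho(\me)}|u'_\me|^2=4c^2/\rho(\me)=\rho(\me)^{-1}|u(\mw)-u(\mv_0)|^2$, so the Dirichlet and boundary contributions cancel in $a_K$; on edges not incident to $\mv_0$, $u$ is constant and contributes nothing. Hence $a_K(u,u)=0$. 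Replacing $u$ by $|u|$ leaves all node values equal to $c$, so the boundary correction vanishes while the Dirichlet integral is unchanged, whence
\[
a_K(|u|,|u|)=4c^2\sum_{\me\in \mE_{\mv_0}}\frac{1}{\rho(\me)}>0=a_K(u,u),
\]
violating the positivity criterion. Taking $c=2$ and $v:=(0\vee u)\wedge 1$, which is $0$ at $\mv_0$, $1$ at every other node, and affine on the middle quarter of each edge in $\mE_{\mv_0}$, the same type of computation gives $a_K(v,v)=3\sum_{\me\in \mE_{\mv_0}}\rho(\me)^{-1}>0=a_K(u,u)$, violating $L^\infty$-contractivity.

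The only subtle point is admissibility of the test functions: continuity at every node is automatic from the piecewise-affine prescription, and edgewise $W^{1,2}$-regularity is trivial. The calculation ultimately mirrors the one carried out in Example~\ref{exa-deriv} for the two-node unweighted graph, as foreseen in the parenthetical remark following the previous proposition.
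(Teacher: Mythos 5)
Your treatment of $A_F$ (direct verification of the Beurling--Deny criteria on $W^{1,2}(\Gfun)$) and your non-positivity argument for $A_K$ are both correct, and they take a more hands-on route than the paper, which cites \cite{KraMugSik07} for the Markov property and reduces the negative statements about $A_K$ to the corresponding properties of the matrix semigroup $\left(\exp(t\mathcal I\mathcal R^{-1}\mathcal I^T)\right)_{t\ge 0}$ via the argument of Example~\ref{exa-deriv} and \cite[Lemma~6.1]{Mug07}. In particular, your edgewise affine $u$ with $u(\mv_0)=-c$, $u\equiv c$ elsewhere does satisfy $a_K(u,u)=0$ (equality in the Cauchy--Schwarz step forces exactly the edgewise affine functions), $|u|$ lies in the form domain, and $a_K(|u|,|u|)>0$, so the first Beurling--Deny criterion fails and the semigroup is not positive.

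The last step, however, has a genuine gap. The criterion you invoke for $v=(0\vee u)\wedge 1$ --- namely that $v$ belong to the form domain and $a_K(v,v)\le a_K(u,u)$ --- characterizes the \emph{sub-Markov} property, i.e.\ the conjunction of positivity and $L^\infty$-contractivity; its failure is already implied by the failure of positivity and tells you nothing about the $\infty$-norm by itself. To disprove $L^\infty$-contractivity alone you must use the criterion that isolates it (Ouhabaz): for real $u$, set $w:=\bigl(1\wedge|u|\bigr)\operatorname{sgn}u=\bigl((-1)\vee u\bigr)\wedge 1$ and test whether $w$ lies in the form domain and $a_K(w,u-w)\ge 0$. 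Fortunately your test function still works: with $c=2$ one gets $w=-1$ at $\mv_0$ and $w=1$ at every other node, $w'$ and $(u-w)'$ have disjoint supports on each edge incident to $\mv_0$, so the Dirichlet part of $a_K(w,u-w)$ vanishes, while the boundary term contributes $-\sum_{\me\in\mE_{\mv_0}}4/\rho(\me)<0$; hence $a_K(w,u-w)<0$ and $L^\infty$-contractivity genuinely fails. Replace your computation of $a_K(v,v)$ by this one (or fall back on the paper's reduction to $\exp(t\mathcal I\mathcal R^{-1}\mathcal I^T)$) and the proof is complete.
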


\begin{proof}
The Markov property of the semigroup generated by $\Delta_{cont}=-A_F$ has been proved in~\cite{KraMugSik07}. 

Just like in  Example~\ref{exa-deriv}, the semigroup generated by $-A_K$ cannot be positive in view of the formula~\eqref{eq:aform}, because $\mathcal I\mathcal R^{-1}\mathcal I^T$ has negative off-diagonal entries, so that $\left(\exp(t\mathcal I\mathcal R^{-1}\mathcal I^T)\right)_{t\ge 0}$ is not positive. Likewise, it is not contractive with respect to the $\infty$-norm because neither is $\left(\exp(t\mathcal I\mathcal R^{-1}\mathcal I^T)\right)_{t\ge 0}$, by~\cite[Lemma~6.1]{Mug07}.
\end{proof}

An alternative way of proving non-positivity of the semigroup generated by $-A_K$ is to observe that the null space of $A_K$ is higher-dimensional, which is not compatible with positivity of a $C_0$-semigroup in view of a version of the Perron--Frobenius theorem, cf.~\cite[Thm.~C.III.3.12]{Nag86}. 

\begin{prop}
The null space of $A_K$ has dimension $|\mV|$.
\end{prop}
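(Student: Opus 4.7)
The plan is to identify harmonic functions on the quantum graph explicitly and show that the boundary condition (KNc) becomes vacuous for them.

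First I would determine the ``edge shape'' of any $u\in\Ker(A_K)$: by definition $u_\me''=0$ on every edge $\me$, so each component is affine, $u_\me(x)=\alpha_\me x+\beta_\me$ with $\alpha_\me,\beta_\me\in\mathbb C$. The continuity condition (Cc) then forces $u_\me(0)=u(\mv_0(\me))$ and $u_\me(\rho(\me))=u(\mv_1(\me))$, where $\mv_0(\me),\mv_1(\me)$ denote the initial and terminal endpoint of $\me$. Hence on edge $\me$ the function $u$ is uniquely determined by the values $u(\mv_0(\me)),u(\mv_1(\me))$ and its (constant) derivative equals
\[
u_\me'=\frac{u(\mv_1(\me))-u(\mv_0(\me))}{\rho(\me)}.
\]
Thus every $u$ that is affine edgewise and continuous at the vertices is parametrised by an arbitrary vector of vertex values in $\mathbb C^\mV$.

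Next I would check that (KNc) is automatically fulfilled by such a $u$. On one hand, the definition of $\partial_\nu u(\mv)$ together with the constancy of each $u_\me'$ gives
\[
\partial_\nu u(\mv)=\sum_{\me\in\mE}\iota^+_{\mv\me}u_\me'(\rho(\me))-\sum_{\me\in\mE}\iota^-_{\mv\me}u_\me'(0)=\sum_{\me\in\mE}\iota_{\mv\me}\frac{u(\mv_1(\me))-u(\mv_0(\me))}{\rho(\me)}.
\]
On the other hand, if $u$ denotes also the associated vertex vector in $\mathbb C^\mV$, then $(\mathcal I^T u)(\me)=u(\mv_1(\me))-u(\mv_0(\me))$, so that $(\mathcal R^{-1}\mathcal I^T u)(\me)$ equals $\rho(\me)^{-1}(u(\mv_1(\me))-u(\mv_0(\me)))$ and finally
\[
(\mathcal I\mathcal R^{-1}\mathcal I^T u)(\mv)=\sum_{\me\in\mE}\iota_{\mv\me}\frac{u(\mv_1(\me))-u(\mv_0(\me))}{\rho(\me)}.
\]
The two expressions coincide, so (KNc) reduces to $\partial_\nu u(\mv)=\partial_\nu u(\mv)$, a tautology. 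This is the only nontrivial step, but it is really just a bookkeeping exercise showing that the discrete Laplacian $\mathcal I\mathcal R^{-1}\mathcal I^T$ is precisely the Dirichlet-to-Neumann map of $A_F$ on harmonic functions.

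To conclude, the linear map $\Ker(A_K)\to \mathbb C^\mV$, $u\mapsto (u(\mv))_{\mv\in\mV}$, is injective (an edgewise affine function vanishes as soon as its two endpoint values do) and, by the previous two paragraphs, surjective: any prescribed vertex data extends to an element of $\Ker(A_K)$ via edgewise affine interpolation. Hence $\dim\Ker(A_K)=|\mV|$, as claimed.
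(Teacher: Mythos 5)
Your proof is correct and follows essentially the same route as the paper's: reduce to edgewise affine functions, observe that (Cc) parametrises these by the vertex values in $\mathbb C^{\mV}$, and verify that (KNc) is automatically satisfied because $\partial_\nu u(\mv)$ coincides with $(\mathcal I\mathcal R^{-1}\mathcal I^T u)(\mv)$ for such functions. The only (cosmetic) difference is that you count dimensions via the bijection $u\mapsto (u(\mv))_{\mv\in\mV}$, whereas the paper counts via the rank of the stacked incidence matrix $\bigl(\begin{smallmatrix}\mathcal I^{+^T}\\ \mathcal I^{-^T}\end{smallmatrix}\bigr)$.
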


\begin{proof}
Let $u\in {\rm Dom}(A_K)$. If $A_K u=0$, then $u$ has to be edgewise affine, i.e.,
\begin{equation}\label{eq:affine}
u_{\me}(x)=a_\me  x+b_\me,\qquad x\in (0,\rho(\me)),\ \me \in \mE,
\end{equation}
for some vectors $a,b\in \mathbb C^\mE$. Let us show that the space of affine functions that belong to ${\rm Dom}(A_K)$ has dimension $|\mV|$, i.e., that only $2|\mE|-|\mV|$ among the $2|\mE|$ entries of $a,b$ are determined by the node conditions $\rm(Cc)$ (or equivalently~\eqref{eq:moref}) and $\rm(KCn)$. To begin with, we remark that the matrix 
\[
\begin{pmatrix}
\mathcal I^{+^T}\\
\mathcal I^{-^T}
\end{pmatrix}
\]
is injective: Indeed, take $x\in \mathbb C^\mV$ and observe that the above matrix maps $x$ into a vector in $\mathbb C^{\mE\times \mE}$ whose $\me$-th (resp., $(|\mE|+\me)$-th) entry is the value $x_\me$ (resp.,\ $x_{|\mE|+\me}$) attained by $x$ in the node of $\mG$ that is terminal (resp., initial) endpoint of $\me$. Because each node is of course (terminal or initial) endpoint of at least one edge (otherwise $L^2(\Gfun)=\emptyset$), this implies that $x_\mv=0$ for each $\mv\in \mV$, i.e., $x=0$. Consequently, the above matrix has rank $|\mV|$ and the claim follows.

On the other hand, condition $\rm(KNc)$ can be equivalently written as
\[
\mathcal I^+ u'(\underline{\rho})-\mathcal I^- u'(0)=\mathcal I\mathcal R^{-1}\mathcal I^T d\ ,
\]
where $u'(\underline{\rho}):=(u'_\me(\rho(\me)))_{\me\in \mE}$ and $d\equiv \left(u(\mv)\right)_{\mv\in \mV}\in \mathbb C^\mV$ is the vector of nodal values that appears in~\eqref{eq:moref}. Now, by~\eqref{eq:moref} we obtain
\[
\mathcal I\mathcal R^{-1}\mathcal I^T d=\mathcal I\mathcal R^{-1}\left(\mathcal I^+ d-\mathcal I^- d)=\mathcal I\mathcal R^{-1}(u(\underline{\rho})-u(0)\right),
\]
where $u(\underline{\rho}):=(u_\me(\rho(\me)))_{\me\in \mE}$. But for functions of the form~\eqref{eq:affine} 
\[
u'_\me(\rho(\me))=u'_\me(0)=a_\me\quad\hbox{and}\quad u_\me(\rho(\me))-u_\me(0)=a_\me \rho(\me)\qquad \hbox{for all }\me\in \mE,
\]
i.e., condition $\rm(KNc)$ turns out to be void. This completes the proof.
\end{proof}

\section{Wentzell-type boundary conditions}

Let $\Omega\subset \mathbb R^d$ be a bounded open domain with $(d-1)$-dimensional Lipschitz boundary. Let us consider again the operator $A$ introduced in Example~\ref{exa-deriv}, i.e.,
\begin{equation}\label{deltamit2}
-\Delta\quad \hbox{with domain}\quad C^\infty_c(\Omega),
\end{equation}
which satisfies the assumptions of Theorem~\ref{thm:Kre47} with the respect to $H=L^2(\Omega)$. 
Consider the isomorphism
\[
\Phi:C(\overline{\Omega})\ni u\mapsto\begin{pmatrix} u\\ u_{|\partial \Omega}\end{pmatrix}\in C(\overline{\Omega})\times C(\partial\Omega).
\]
Now, the isomorphic image of $A=-\Delta$ under $\Phi$ is symmetric and positive semidefinite in the larger Hilbert space 
\[
\mathbb H:=L^2(\Omega)\times L^2(\partial\Omega)\ ,
\]
too. However, its domain
\[
\Phi\left(C^\infty_c(\Omega)\right)=\left\{\begin{pmatrix} u\\ f\end{pmatrix}\in C^\infty_c(\Omega)\times C^\infty(\partial \Omega),\ u_{|\partial \Omega}=f \right\}
\]
is not dense in $L^2(\Omega)\times L^2(\partial\Omega)$, and indeed there are several, mutually not comparable closed operators on $L^2(\Omega)\times L^2(\partial\Omega)$ whose domain contains $\Phi(C^\infty_c(\Omega))$. However, 
\[
{\rm Dom}(\mathbb A):=\left\{\begin{pmatrix} u\\ f\end{pmatrix}\in C^\infty(\overline\Omega)\times C^\infty(\partial \Omega),\ u_{|\partial \Omega}=f,\ \frac{\partial u}{\partial \nu}=0 \right\}
\]
is indeed dense in $\mathbb H$, and for all $\eta_1,\eta_2\ge 0$ the operator
\[
\mathbb A:=-\begin{pmatrix}\Delta & 0\\0 & \eta_1\Delta_{\partial \Omega}-\eta_2\Id \end{pmatrix} \quad\hbox{with domain}\quad {\rm Dom}(\mathbb A)
\] 
is  symmetric and positive semidefinite.

\begin{rem}
The Laplace--Beltrami operator $-\Delta_{\partial \Omega}$ with domain $C^\infty(\partial \Omega)$ is in its own right symmetric and positive semidefinite, cf.~\cite[Chapter~5]{Dav89}: In fact, it is essentially self-adjoint and its closure is associated with a Dirichlet form, hence it generates a sub-Markovian semigroup on $L^2(\partial \Omega)$. 
Observe that the abstract Cauchy problem associated with $-\mathbb A$ is equivalent to the initial-value problem for 
\[
\left\{
\begin{split}
\frac{\partial u}{\partial t}(t,x)&=\Delta u(t,x),\qquad &t\ge 0,\ x\in \Omega,\\
\frac{\partial u}{\partial t}(t,z)&=\eta_1\Delta_{\partial \Omega} u(t,z)-\eta_2 u(t,z),\qquad & t\ge 0,\ z\in \partial\Omega,
\end{split}
\right.
\]
Taking the trace on $\partial \Omega$ of the first equation and plugging it into the second one, we obtain 
\[
\Delta u(t,z)=\eta_1\Delta_{\partial \Omega} u(t,z)-\eta_2 u(t,z),\qquad  t\ge 0,\ z\in \partial\Omega,
\]
a class of boundary conditions the study of which goes back to~\cite{Ven60}: For $\eta_1=\eta_2=0$ we recover the classical Wentzell boundary conditions studied by Feller already in the early 1950s, cf.~\cite[\S~VI.5]{EngNag00} and references therein.
\end{rem}

By the Gauß--Green formulae the quadratic form associated with $\mathbb A$ is
\[
a:\left(\begin{pmatrix}u\\ u_{|\partial \Omega}\end{pmatrix},\begin{pmatrix}v\\ v_{|\partial \Omega}\end{pmatrix} \right)\mapsto \int_\Omega \nabla u\cdot \overline{\nabla v} \ dx+\eta_1 \int_{\partial \Omega} \nabla u\cdot \overline{\nabla v} \ d\sigma-\eta_2 \int_{\partial \Omega} u \overline{v} \ d\sigma,
\]
with form domain ${\rm Dom}(\mathbb A)$, where $d\sigma$ denotes the surface measure of $\partial \Omega$. This form is closable and its closure is the form $a_F$ that acts just as $a$ does, defined on the form domain 
\[
\mathbb V_F:=\left\{
\begin{pmatrix} u\\ f\end{pmatrix}\in H^1(\Omega)\times D_2: \ u_{|\partial \Omega}=f \right\},
\]
where 
\[
D_2:=\left\{
\begin{split}
L^2(\partial\Omega)\quad &\hbox{if }\eta_1=0,\\
H^1(\partial \Omega)\quad &\hbox{if }\eta_1>0.
\end{split}
\right.
\]
Following the computations performed in~\cite{CocFavGol08,VazVit11,Mug14b} we can determine the operator associated with $a_F$.

\begin{prop}
The operator $\mathbb A_F$ defined by
\[
\begin{split}
{\rm Dom}(\mathbb A_F)&:=\left\{
\begin{pmatrix} u\\ f\end{pmatrix}\in H^1(\Omega)\times D_2,\ \Delta u\in L^2(\Omega),\ \eta_1\Delta_{\partial \Omega}f\in L^2(\partial\Omega), \ u_{|\partial \Omega}=f \right\},\\
\mathbb A_F&:=-\begin{pmatrix}\Delta & 0\\-\frac{\partial}{\partial \nu} & \eta_1\Delta_{\partial \Omega}-\eta_2\Id \end{pmatrix},
\end{split}
\]
is the Friedrichs extension of $\mathbb A$ on $\mathbb H$.
\end{prop}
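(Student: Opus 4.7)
The plan is to follow the standard variational recipe: identify $a_F$ as the closure in $\mathbb H$ of the quadratic form of $\mathbb A$, and then read off $\mathbb A_F$ as the operator attached to $a_F$ via~\eqref{eq:opassoc}. Since $\mathbb A$ has already been shown to be symmetric, positive semidefinite and densely defined in $\mathbb H$, Theorem~\ref{thm:Kre47} already guarantees that a Friedrichs extension exists; the work is to recognise it as $\mathbb A_F$.

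As a warm-up I would check that $a$ genuinely is the quadratic form of $\mathbb A$ on ${\rm Dom}(\mathbb A)$: for $z=(u,u_{|\partial\Omega})$ with $\partial u/\partial\nu=0$, one application of the Gauss--Green formula on $\Omega$ (whose boundary contribution vanishes thanks to the Neumann-type constraint) and one integration by parts on the closed manifold $\partial\Omega$ (no boundary contribution at all) yield $(\mathbb A z\mid z)_\mathbb H=a(z,z)$. Closability of $a$ then follows from symmetry and positive semidefiniteness. I would identify the form domain of the closure as $\mathbb V_F$ by pairing the inclusion $\overline{{\rm Dom}(\mathbb A)}\subseteq \mathbb V_F$, which is immediate from continuity of the trace $H^1(\Omega)\to L^2(\partial\Omega)$, with a density argument for the reverse inclusion: smooth functions on $\overline\Omega$ with vanishing normal derivative are dense in $\mathbb V_F$ in the anisotropic form norm. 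For $\eta_1=0$ this reduces to the classical density of $\{u\in C^\infty(\overline\Omega):\partial u/\partial\nu=0\}$ in $H^1(\Omega)$; for $\eta_1>0$ one additionally has to control the tangential gradient on $\partial\Omega$, which can be arranged by first approximating $f\in H^1(\partial\Omega)$ by smooth functions on $\partial\Omega$, lifting these to $C^\infty(\overline\Omega)$, and finally adding a correction localised in a tubular neighbourhood of $\partial\Omega$ to annihilate the residual normal derivative.

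To extract the operator, I would feed the defining identity $a_F((u,f),(v,v_{|\partial\Omega}))=((g,h)\mid(v,v_{|\partial\Omega}))_\mathbb H$ two classes of test functions. Testing first against $v\in C^\infty_c(\Omega)$ only the bulk integral survives, and a standard distributional argument forces $\Delta u\in L^2(\Omega)$ with $g=-\Delta u$. Testing next against arbitrary $(v,v_{|\partial\Omega})\in \mathbb V_F$, the Gauss--Green formula on $\Omega$ (the normal derivative being a priori defined in $H^{-1/2}(\partial\Omega)$, which is legitimate once $u\in H^1(\Omega)$ and $\Delta u\in L^2(\Omega)$) combined with integration by parts on $\partial\Omega$ produces the Wentzell-type identity that reads off $h$ as the boundary component of $\mathbb A_F(u,f)$. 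The requirement $h\in L^2(\partial\Omega)$ then forces $\eta_1\Delta_{\partial\Omega}f\in L^2(\partial\Omega)$ in the distributional sense on $\partial\Omega$, completing the description of ${\rm Dom}(\mathbb A_F)$.

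The main obstacle I anticipate is the density step when $\eta_1>0$: the anisotropic norm that combines $H^1(\Omega)$ on the bulk with $H^1(\partial\Omega)$ tangentially, together with the pointwise constraint $\partial u/\partial\nu=0$, is not automatically reached by the obvious mollification, and the trace-correction argument must be carried out carefully on a general Lipschitz $\partial\Omega$. The remaining analytic inputs -- continuous lifts between bulk and boundary Sobolev spaces and the distributional normal derivative in $H^{-1/2}(\partial\Omega)$ -- are classical, and the explicit computations of~\cite{CocFavGol08,VazVit11,Mug14b} to which the proposition already appeals dispense with most of the bookkeeping once the form domain is pinned down.
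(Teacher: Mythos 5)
Your proposal is correct and follows essentially the same route as the paper, which likewise obtains $a_F$ as the closure of the Gau\ss--Green form of $\mathbb A$ on $\mathbb V_F$ and then reads off the associated operator (delegating that computation to \cite{CocFavGol08,VazVit11,Mug14b}). You merely make explicit the density and test-function steps that the paper leaves to the cited references.
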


We are now going to determine the Krein--von Neumann extension of $\mathbb A$. 
In the following we assume for the sake of simplicity that 
\[
\eta_2>0\ . 
\]

\begin{lemma}
If $\eta_2>0$, then the symmetric operator $\mathbb A-\epsilon \Id$ with domain ${\rm Dom}(\mathbb A)$ is positive semidefinite on $\mathbb H$ for some $\epsilon>0$.
\end{lemma}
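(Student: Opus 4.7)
The plan is to evaluate $\langle \mathbb{A}\binom{u}{f},\binom{u}{f}\rangle_\mathbb{H}$ directly via Gauß--Green and then dominate the $\mathbb{H}$-norm using a suitable Poincaré-type inequality, exploiting $\eta_2>0$ to handle the potential zero modes of the Neumann Laplacian.

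First, I would use integration by parts on each component. For $\binom{u}{f}\in{\rm Dom}(\mathbb{A})$ one has $u\in C^\infty(\overline{\Omega})$ with $\partial u/\partial\nu=0$ on $\partial\Omega$, so the Gauß--Green formula gives
\[
-\int_\Omega \Delta u\cdot\bar u\,dx=\int_\Omega |\nabla u|^2\,dx,
\]
with no surface contribution. Since $\partial\Omega$ is a closed $(d-1)$-manifold without boundary, the Laplace--Beltrami part yields
\[
-\eta_1\int_{\partial\Omega}\Delta_{\partial\Omega}f\cdot \bar f\,d\sigma=\eta_1\int_{\partial\Omega}|\nabla_{\partial\Omega}f|^2\,d\sigma.
\]
Adding the zero-order term $\eta_2\|f\|_{L^2(\partial\Omega)}^2$, I would conclude
\[
\left\langle \mathbb{A}\binom{u}{f},\binom{u}{f}\right\rangle_{\mathbb{H}}=\int_\Omega |\nabla u|^2\,dx+\eta_1\int_{\partial\Omega}|\nabla_{\partial\Omega}f|^2\,d\sigma+\eta_2\int_{\partial\Omega}|f|^2\,d\sigma.
\]

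Next, I would invoke the well-known fact that on $H^1(\Omega)$ the quantity $\|\nabla u\|_{L^2(\Omega)}^2+\|u_{|\partial\Omega}\|_{L^2(\partial\Omega)}^2$ defines an equivalent norm to the usual $H^1$-norm; concretely, there exists $C>0$ such that
\[
\|u\|_{L^2(\Omega)}^2\le C\bigl(\|\nabla u\|_{L^2(\Omega)}^2+\|u_{|\partial\Omega}\|_{L^2(\partial\Omega)}^2\bigr),\qquad u\in H^1(\Omega).
\]
A standard contradiction argument using Rellich--Kondrachov yields this: a minimising sequence would converge in $L^2(\Omega)$ to a nonzero constant whose trace on $\partial\Omega$ vanishes, which is impossible.

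Finally, I would split $\eta_2\|f\|_{L^2(\partial\Omega)}^2=\tfrac{\eta_2}{2}\|f\|^2+\tfrac{\eta_2}{2}\|f\|^2$, bound the $L^2(\Omega)$-norm of $u$ by combining the first half with $\|\nabla u\|_{L^2(\Omega)}^2$ via the inequality above, and use the second half to directly control $\|f\|_{L^2(\partial\Omega)}^2$. Dropping the non-negative Laplace--Beltrami term (which vanishes identically when $\eta_1=0$), I obtain
\[
\left\langle \mathbb{A}\binom{u}{f},\binom{u}{f}\right\rangle_{\mathbb{H}}\ge\epsilon\bigl(\|u\|_{L^2(\Omega)}^2+\|f\|_{L^2(\partial\Omega)}^2\bigr)
\]
with $\epsilon:=\min\{\min(1,\eta_2/2)/C,\ \eta_2/2\}>0$, which is the desired semiboundedness. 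The only mild obstacle is the Poincaré-type trace inequality, but this is entirely standard; the essential point is that the hypothesis $\eta_2>0$ breaks the degeneracy that the Neumann Laplacian alone would have on constants.
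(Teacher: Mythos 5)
Your argument is correct, but it takes a genuinely different route from the paper's. The paper does not estimate the quadratic form from below directly: it reduces the claim to showing that the Friedrichs extension $\mathbb A_F$ is injective and has compact resolvent, so that its smallest eigenvalue is a (non-constructive) positive $\epsilon$. Injectivity is obtained from the same Gauß--Green identity you derive (your sign $+\eta_2\int_{\partial\Omega}|f|^2\,d\sigma$ is the one consistent with the definition of $\mathbb A$ and with positive semidefiniteness; the form displayed in the paper carries a sign slip there), and the compactness of the embedding $\mathbb V_F\hookrightarrow\mathbb H$ is quoted from the literature for $\eta_1=0$ and deduced from $\mathbb V_F\hookrightarrow H^1(\Omega)\times H^1(\partial\Omega)$ for $\eta_1>0$. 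You instead prove a quantitative coercivity estimate by combining the same identity with the Poincar\'e-type trace inequality $\|u\|_{L^2(\Omega)}^2\le C\bigl(\|\nabla u\|_{L^2(\Omega)}^2+\|u_{|\partial\Omega}\|_{L^2(\partial\Omega)}^2\bigr)$, itself established by a Rellich--Kondrachov contradiction argument. Both proofs ultimately rest on compactness of the embedding of $H^1(\Omega)$ into $L^2(\Omega)$; yours buys an explicit $\epsilon$ in terms of the constant $C$ and avoids any appeal to the spectral theory of $\mathbb A_F$, while the paper's is shorter because the compact embedding of the form domain is needed later anyway (e.g.\ to apply Lemma~\ref{lem:kreinchar} and discuss discreteness of spectra).
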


\begin{proof}
We already know that $\mathbb A$ is positive semidefinite. In order to prove the claim it suffices to check that $\mathbb A_F$ is injective and has compact resolvent.

If $u\in {\rm Dom}(\mathbb A)$ with $\mathbb A u=0$, then 
\[
0=\int_\Omega |\nabla u|^2\ dx+\eta_1 \int_{\partial \Omega} |\nabla u|^2 \ d\sigma-\eta_2 \int_{\partial \Omega} |u|^2 \ d\sigma,
\]
hence $u$ is constant on $\overline\Omega$ and in fact it has to vanish identically because $\eta_2>0$. Furthermore, $\mathbb V_F$ is compactly embedded in $\mathbb H$: This has been observed in~\cite{AreMetPal03} in the case of $\eta_1=0$ and follows from the continuous embedding of the form domain in $H^1(\Omega)\times H^1(\partial \Omega)$ if $\eta_1>0$.
\end{proof}

We can thus apply Lemma~\ref{lem:kreinchar} and in particular~\eqref{eq:kreinchar2}. A direct computation shows that the adjoint $\mathbb A^*$ of $\mathbb A$ is given by
\[
\mathbb A^*=-\begin{pmatrix}\Delta & 0\\-\frac{\partial }{\partial \nu} & \eta_1\Delta_{\partial \Omega}-\eta_2\Id \end{pmatrix}
\] 
with domain
\[
\left\{\begin{pmatrix} u\\ f\end{pmatrix}\in L^2(\Omega)\times L^2(\partial\Omega),\ \Delta u\in L^2(\Omega),\ \frac{\partial u}{\partial \nu} \in L^2(\partial\Omega),\ \eta_1\Delta_{\partial \Omega}f\in L^2(\partial\Omega) \right\},
\]
so that its null space is given by
\[
\begin{split}
\Ker \mathbb A^*&=\left\{\begin{pmatrix} u\\ f\end{pmatrix}\in L^2(\Omega)\times L^2(\partial\Omega),\ \Delta u=0,\ \frac{\partial u}{\partial \nu}=\eta_1\Delta_{\partial \Omega}f-\eta_2 f\in L^2(\partial\Omega) \right\}\\
&=\{u\in L^2(\Omega):\Delta u=0\}\times \{f\in L^2(\partial\Omega):\eta_1\Delta_{\partial \Omega}f-\eta_2 f\in L^2(\partial\Omega) \}\\
&=\{u\in L^2(\Omega):\Delta u=0\}\times \{f\in L^2(\partial\Omega):\eta_1\Delta_{\partial \Omega}f\in L^2(\partial\Omega) \}\\
&={\rm Har}(\Omega)\times {\rm Dom}(\eta_1\Delta_{\partial \Omega}),
\end{split}
\]
(recall that by assumption $\eta_1\ge 0$ and therefore $\eta_1\Delta_{\partial \Omega}-\eta_2\Id $ is bijective from the domain of $\Delta_{\partial \Omega}$ to $L^2(\partial \Omega)$). We finally consider
\[
\mathbb V_K:=\mathbb V_F\oplus \Ker \mathbb A^*
\]
and denote by $\mathbb P$ the orthogonal projector of $\mathbb V_K$ onto $\mathbb V_F$. We henceforth study the quadratic form $a$ defined by
\[
a_K({\mathbb u},{\mathbb v}):=a_F({\mathbb{Pu}},{\mathbb{Pv}})\qquad \mathbb u,\mathbb v\in \mathbb V_K.
\]
A direct computation yields the operator associated with $a_K$ and we obtain the following.


\begin{theo}\label{theo:determkrein}
The operator $\mathbb A_K$ defined by
\[
\begin{split}
{\rm Dom}(\mathbb A_K)&:=\left\{
\begin{pmatrix} u\\ f\end{pmatrix}\in H^1(\Omega)\times D_2,\ \Delta u\in L^2(\Omega),\ \frac{\partial Pu}{\partial \nu}\in L^2(\partial\Omega),\ \eta_1\Delta_{\partial \Omega}f\in L^2(\partial\Omega), \ u_{|\partial \Omega}=f \right\},\\
\mathbb A_K&:=-\begin{pmatrix}\Delta & 0\\-\frac{\partial P}{\partial \nu} & \eta_1\Delta_{\partial \Omega}-\eta_2\Id \end{pmatrix},
\end{split}
\]
where $P$ is the orthogonal projector of $H^1(\Omega)$ onto $H^1(\Omega)\ominus {\rm Har}(\Omega)=H^1_0(\Omega)$, is the Krein--von Neumann extension of $\mathbb A$ on $\mathbb H$.
\end{theo}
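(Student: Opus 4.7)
The plan is to apply Lemma~\ref{lem:kreinchar}, whose hypothesis $\mathbb A-\epsilon\Id\ge 0$ for some $\epsilon>0$ is precisely the content of the preceding lemma. This puts the Krein--von Neumann extension at our disposal via the simple form formula~\eqref{eq:kreinchar2}: $\mathbb V_K=\mathbb V_F\oplus\Ker\mathbb A^*$, and $a_K(\mathbb u,\mathbb v)=a_F(\mathbb P\mathbb u,\mathbb P\mathbb v)$, with $\mathbb P$ the projector of $\mathbb V_K$ onto $\mathbb V_F$ along the Krein summand. The kernel $\Ker\mathbb A^*$ has already been computed in the paragraph preceding the theorem.

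The decisive tool for reading off $\mathbb A_K$ is the Royden-type decomposition
\[
H^1(\Omega)=H^1_0(\Omega)\oplus\bigl({\rm Har}(\Omega)\cap H^1(\Omega)\bigr),
\]
orthogonal with respect to the Dirichlet form $\int_\Omega\nabla\cdot\overline{\nabla\cdot}\,dx$; the associated projector is precisely the $P$ appearing in the statement. For a test pair $(\mathbb u,\mathbb v)$ with $\mathbb u\in{\rm Dom}(\mathbb A_K)$ and $\mathbb v=(v,f_v)\in\mathbb V_F$ -- which is enough, since $a_K$ vanishes on $\Ker\mathbb A^*$ -- we have $\mathbb P\mathbb u=\mathbb u$ and $\mathbb P\mathbb v=\mathbb v$, whence $a_K(\mathbb u,\mathbb v)=a_F(\mathbb u,\mathbb v)$. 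I would split $u=Pu+(I-P)u$ and apply the Gauss--Green formula to each piece: for the Dirichlet piece $Pu\in H^1_0(\Omega)$ (using $\Delta Pu=\Delta u\in L^2(\Omega)$ and the assumed $\partial Pu/\partial\nu\in L^2(\partial\Omega)$), and for the harmonic complement $(I-P)u$ (using $\Delta(I-P)u=0$); the Dirichlet-form orthogonality of the two summands annihilates the cross contributions. Combining with the surface Gauss--Green identity for the Laplace--Beltrami term (cf.\ the preceding Remark) rearranges the right hand side into
\[
\int_\Omega(-\Delta u)\,\bar v\,dx+\int_{\partial\Omega}\Bigl(\tfrac{\partial Pu}{\partial\nu}+\eta_2 f-\eta_1\Delta_{\partial\Omega}f\Bigr)\bar f_v\,d\sigma.
\]
Identifying this with $(\mathbb A_K\mathbb u\mid\mathbb v)_\mathbb H$ via~\eqref{eq:opassoc} recovers exactly the action and the domain conditions claimed for $\mathbb A_K$.

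The main technical obstacle is the careful reconciliation of the abstract projector $\mathbb P$ on $\mathbb V_K$ with the concrete Royden projector $P$ on $H^1(\Omega)$. Two points require attention: (i) the direct sum $\mathbb V_F\oplus\Ker\mathbb A^*$ is unambiguous only if $\mathbb V_F\cap\Ker\mathbb A^*=\{0\}$, which follows from the injectivity of $\mathbb A_F$ guaranteed in the preceding lemma (and hence from the assumption $\eta_2>0$); and (ii) on the Lipschitz surface $\partial\Omega$ the normal trace $\partial Pu/\partial\nu$ lives \emph{a priori} only in $H^{-1/2}(\partial\Omega)$, and its $L^2(\partial\Omega)$-integrability built into ${\rm Dom}(\mathbb A_K)$ is precisely what upgrades the Gauss--Green pairing to a genuine $L^2$-integral on $\partial\Omega$. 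Once these points are addressed, the identification of $\mathbb A_K$ becomes a routine (if notationally heavy) integration-by-parts exercise.
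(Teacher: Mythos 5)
Your proposal follows the paper's route exactly: Lemma~\ref{lem:kreinchar} together with~\eqref{eq:kreinchar2}, the computation of $\Ker\mathbb A^*$, the decomposition $\mathbb V_K=\mathbb V_F\oplus\Ker\mathbb A^*$, and a Gauss--Green computation to read off the operator associated with $a_K$ --- the paper compresses this last step into the words ``a direct computation.''

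One sentence of yours is internally inconsistent and should be repaired: for $\mathbb u\in{\rm Dom}(\mathbb A_K)$ one does \emph{not} have $\mathbb P\mathbb u=\mathbb u$, since ${\rm Dom}(\mathbb A_K)$ contains $\Ker\mathbb A^*$ by Lemma~\ref{lem:kreinchar} and $\mathbb P$ annihilates that summand; consequently $a_K(\mathbb u,\mathbb v)=a_F(\mathbb P\mathbb u,\mathbb v)$ for $\mathbb v\in\mathbb V_F$, not $a_F(\mathbb u,\mathbb v)$. Taken literally, your identity would make the integration by parts produce the full normal derivative $\frac{\partial u}{\partial\nu}$ --- i.e., the Friedrichs operator --- rather than $\frac{\partial Pu}{\partial\nu}$. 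Your subsequent splitting $u=Pu+(I-P)u$ and the final boundary expression $\frac{\partial Pu}{\partial\nu}+\eta_2 f-\eta_1\Delta_{\partial\Omega}f$ show that what you actually computed is $a_F(\mathbb P\mathbb u,\mathbb v)$, which is the correct quantity, so the slip is confined to that one sentence. A smaller second point: testing only against $\mathbb v\in\mathbb V_F$ determines the candidate $\mathbb A_K\mathbb u$ (as $\mathbb V_F$ is dense in $\mathbb H$), but by~\eqref{eq:opassoc} membership in ${\rm Dom}(\mathbb A_K)$ also requires $(\mathbb A_K\mathbb u\mid\mathbb w)_{\mathbb H}=a_K(\mathbb u,\mathbb w)=0$ for all $\mathbb w\in\Ker\mathbb A^*$, i.e., $\mathbb A_K\mathbb u\perp\Ker\mathbb A^*$; this should be checked (or noted to hold automatically for the Krein--von Neumann extension, whose range lies in $\overline{{\rm ran}\,\mathbb A}$) rather than dismissed as redundant.
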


\begin{rem}
The Dirichlet-to-Neumann operator 
\[
{\rm D\!\!N}:H^1(\partial\Omega)\to L^2(\partial\Omega)
\]
is a selfadjoint, positive semidefinite pseudo-differential operator of order 1 defined by
\[
{\rm D\!\!N} f:=-\frac{\partial u}{\partial \nu}
\]
whenever there exists $u\in H^1(\Omega)$ such that
\begin{equation}\label{bvp-dton}
\left\{
\begin{split}
\Delta u&=0\qquad \hbox{in }\Omega,\\
u&=f\qquad \hbox{on } \partial \Omega.
\end{split}
\right.
\end{equation}
(This definition has been generalized to so-called \emph{quasi-convex} domains in~\cite[\S\S~5--6]{AshGesMit13}.)
But by definition
\[
-{\rm D\!\!N} u_{|\partial \Omega}=\frac{\partial (\Id-P)u}{\partial \nu},
\]
where $P$ is the orthogonal projector of $H^1(\Omega)$ onto $H^1(\Omega)\ominus {\rm Har}(\Omega)$, so that an equivalent representation of $\mathbb A_K$ is
\[
\mathbb A_K=-\begin{pmatrix}\Delta & 0\\-\frac{\partial }{\partial \nu} & -{\rm D\!\!N}+\eta_1\Delta_{\partial \Omega}-\eta_2\Id \end{pmatrix}.
\]
Thus, the parabolic problem associated with $-\mathbb A_K$ is a heat equation with dynamic boundary conditions
\[
\frac{\partial u}{\partial t}(t,z)=-\frac{\partial u}{\partial \nu}(t,z) -{\rm D\!\!N}u(t,z)+\eta_1\Delta_{\partial \Omega}u(t,z)-\eta_2u(t,z),\qquad t\ge 0,\ z\in \partial \Omega,
\]
which is tightly related to
\[
\Delta u(t,z)+\frac{\partial u}{\partial \nu}(t,z) -(-\Delta_{\partial \Omega})^\frac12 u(t,z)-\eta_1\Delta_{\partial \Omega}u(t,z)+\eta_2u(t,z)=0,\qquad t\ge 0,\ z\in \partial \Omega,
\]
studied in~\cite[Exa.~5.9]{Pos13} -- since the Dirichlet-to-Neumann operator agrees with $-(-\Delta_{\partial \Omega})^\frac12$ up to a lower order perturbation whenever $\partial\Omega$ is smooth enough, cf.~\cite[Prop.~C.1, pag.~453]{Tay96}.
\end{rem}

\bibliographystyle{alpha}
\bibliography{../../referenzen/literatur}

\newcommand{\etalchar}[1]{$^{#1}$}
\begin{thebibliography}{HKMW13}

\bibitem[AGM{\etalchar{+}}10]{AshGesMit10}
M.S. Ashbaugh, F.~Gesztesy, M.~Mitrea, R.~Shterenberg, and G.~Teschl.
\newblock {The Krein--von Neumann extension and its connection to an abstract
  buckling problem}.
\newblock {\em Math.\ Nachr.}, 283:165--179, 2010.

\bibitem[AGM{\etalchar{+}}13]{AshGesMit13}
M.S. Ashbaugh, F.~Gesztesy, M.~Mitrea, R.~Shterenberg, and G.~Teschl.
\newblock {A Survey on the Krein--von Neumann Extension, the Corresponding
  Abstract Buckling Problem, and Weyl-type Spectral Asymptotics for Perturbed
  Krein Laplacians in Nonsmooth Domains}.
\newblock In M.~Demuth and W.~Kirsch, editors, {\em Mathematical Physics,
  Spectral Theory and Stochastic Analysis}, pages 1--106, Berlin, 2013.
  Springer-Verlag.

\bibitem[AGMT10]{AshGesMit10b}
M.S. Ashbaugh, F.~Gesztesy, M.~Mitrea, and G.~Teschl.
\newblock {Spectral theory for perturbed Krein Laplacians in nonsmooth
  domains}.
\newblock {\em Adv.\ Math.}, 223:1372--1467, 2010.

\bibitem[AMPR03]{AreMetPal03}
W.\ Arendt, G.\ Metafune, D.\ Pallara, and S.\ Romanelli.
\newblock The {L}aplacian with {W}entzell--{R}obin boundary conditions on
  spaces of continuous functions.
\newblock {\em Semigroup Forum}, 67:247--261, 2003.

\bibitem[AN70]{AndNis70}
T.~Ando and K.~Nishio.
\newblock Positive selfadjoint extensions of positive symmetric operators.
\newblock {\em Tokohu Math.\ J.}, 22:65--75, 1970.

\bibitem[AS80]{AloSim80}
A.~Alonso and B.~Simon.
\newblock The {B}irman--{K}re\v{\i}n--{V}ishik theory of self-adjoint
  extensions of semibounded operators.
\newblock {\em J.\ Operator Th.}, 4:251--270, 1980.

\bibitem[AtE12]{AreEls12}
W.\ Arendt and T.~ter Elst.
\newblock Sectorial forms and degenerate differential operators.
\newblock {\em J.\ Operator Th.}, 67:33--72, 2012.

\bibitem[BBD{\etalchar{+}}05]{BatBinDij05}
A.~B{\'a}tkai, P.~Binding, A.~Dijksma, R.~Hryniv, and H.~Langer.
\newblock Spectral problems for operator matrices.
\newblock {\em Math.\ Nachr.}, 278:1408--1429, 2005.

\bibitem[BC05]{BroChr05}
B.M.\ Brown and J.S.\ Christiansen.
\newblock {On the Krein and Friedrichs extensions of a positive Jacobi
  operator}.
\newblock {\em Expos.\ Mathematicae}, 23:179--186, 2005.

\bibitem[BK13]{BerKuc13}
G.\ Berkolaiko and P.~Kuchment.
\newblock {\em {Introduction to Quantum Graphs}}, volume 186 of {\em Math.\
  Surveys and Monographs}.
\newblock Amer.\ Math.\ Soc., Providence, RI, 2013.

\bibitem[BM13]{BobMug13}
A.~Bobrowski and D.~Mugnolo.
\newblock On moment preserving cosine families and semigroups in ${C}[0,1]$.
\newblock {\em J.\ Evol.\ Equ.}, 13:715--735, 2013.

\bibitem[CFG{\etalchar{+}}08]{CocFavGol08}
G.M.\ Coclite, A.\ Favini, G.R.\ Goldstein, J.A.\ Goldstein, and S.\ Romanelli.
\newblock Continuous dependence on the boundary conditions for the {W}entzell
  {L}aplacian.
\newblock {\em Sem.\ Forum}, 77:101--108, 2008.

\bibitem[CM09]{CarMug09}
S.\ Cardanobile and D.\ Mugnolo.
\newblock Parabolic systems with coupled boundary conditions.
\newblock {\em J.\ Differ.\ Equ.}, 247:1229--1248, 2009.

\bibitem[Dav89]{Dav89}
E.B.\ Davies.
\newblock {\em Heat {K}ernels and {S}pectral {T}heory}, volume~92 of {\em
  Cambridge Tracts Math.}
\newblock Cambridge Univ.\ Press, Cambridge, 1989.

\bibitem[DL88]{DauLio88}
R.\ Dautray and J.-L.\ Lions.
\newblock {\em {Mathematical Analysis and Numerical Methods for Science and
  Technology, Vol.\ 2}}.
\newblock Springer-Verlag, Berlin, 1988.

\bibitem[EN00]{EngNag00}
K.-J.\ Engel and R.\ Nagel.
\newblock {\em One-{P}arameter {S}emigroups for {L}inear {E}volution
  {E}quations}, volume 194 of {\em Graduate Texts in Mathematics}.
\newblock Springer-Verlag, New York, 2000.

\bibitem[Fuk80]{Fuk80}
M.~Fukushima.
\newblock {\em {Dirichlet forms and Markov processes}}, volume~23 of {\em
  Math.\ Library}.
\newblock North-Holland, Amsterdam, 1980.

\bibitem[GP10]{GraPol10}
L.J. Grady and J.R. Polimeni.
\newblock {\em Discrete Calculus: Applied Analysis on Graphs for Computational
  Science}.
\newblock Springer-Verlag, New York, 2010.

\bibitem[Gre87]{Gre87}
G.\ Greiner.
\newblock Perturbing the boundary conditions of a generator.
\newblock {\em Houston J.\ Math}, 13:213--229, 1987.

\bibitem[Gru83]{Gru83}
G.~Grubb.
\newblock Spectral asymptotics for the ``soft'' selfadjoint extension of a
  symmetric elliptic differential operator.
\newblock {\em J.\ Operator Th.}, 10:9--20, 1983.

\bibitem[Gru12]{Gru12}
G.~Grubb.
\newblock Krein-like extensions and the lower boundedness problem for elliptic
  operators.
\newblock {\em J.\ Differ.\ Equ.}, 252:852--885, 2012.

\bibitem[HKLW12]{HaeKelLen12}
S.\ Haeseler, M.\ Keller, D.\ Lenz, and R.\ Wojciechowski.
\newblock {Laplacians on infinite graphs: Dirichlet and Neumann boundary
  conditions}.
\newblock {\em J.\ Spectral Theory}, 2:397--432, 2012.

\bibitem[HKMW13]{HuaKelMas13}
X.~Huang, M.~Keller, J.~Masamune, and R.K. Wojciechowski.
\newblock A note on self-adjoint extensions of the {L}aplacian on weighted
  graphs.
\newblock {\em J.\ Funct.\ Anal.}, 265:1556--1578, 2013.

\bibitem[KL12]{KelLen12}
M.~Keller and D.~Lenz.
\newblock Dirichlet forms and stochastic completeness of graphs and subgraphs.
\newblock {\em J.\ Reine Angew.\ Math.}, 666:189--223, 2012.

\bibitem[KMS07]{KraMugSik07}
M.\ {Kramar Fijav\v{z}}, D.\ Mugnolo, and E.\ Sikolya.
\newblock Variational and semigroup methods for waves and diffusion in
  networks.
\newblock {\em Appl.\ Math.\ Optim.}, 55:219--240, 2007.

\bibitem[Kre47]{Kre47}
M.G. Krein.
\newblock The theory of self-adjoint extensions of semi-bounded hermitian
  transformations and its applications. {I}.
\newblock {\em Mat. Sbornik}, 20:431--495, 1947.

\bibitem[MT07]{MakTse07}
K.A. Makarov and E.~Tsekanovskii.
\newblock On $\mu$-scale invariant operators.
\newblock {\em Meth.\ Funct.\ Anal.\ Topol.}, 13:181--186, 2007.

\bibitem[Mug07]{Mug07}
D.\ Mugnolo.
\newblock Gaussian estimates for a heat equation on a network.
\newblock {\em Networks Het.\ Media}, 2:55--79, 2007.

\bibitem[Mug14a]{Mug14b}
D.\ Mugnolo.
\newblock Asymptotics of semigroups generated by operator matrices.
\newblock {\em Arabian J.\ Math.}, (in press), 2014.

\bibitem[Mug14b]{Mug14}
D.~Mugnolo.
\newblock {\em {Semigroup Methods for Evolution Equations on Networks}}.
\newblock Understanding Complex Systems. Springer-Verlag, Berlin, 2014.

\bibitem[Nag86]{Nag86}
R.\ Nagel, editor.
\newblock {\em One-{P}arameter {S}emigroups of {P}ositive {O}perators}, volume
  1184 of {\em Lect.\ Notes Math.}
\newblock Springer-Verlag, Berlin, 1986.

\bibitem[Pos]{Pos13}
A.~Posilicano.
\newblock Markovian extensions of symmetric second order elliptic differential
  operators.
\newblock arXiv:1211.2415.

\bibitem[RSV97]{RigSalVig97}
M.~Rigoli, M.~Salvatori, and M.~Vignati.
\newblock Subharmonic functions on graphs.
\newblock {\em Israel J.\ Math.}, 99:1--27, 1997.

\bibitem[Sch12]{Sch12}
K.~Schm{\"u}dgen.
\newblock {\em {Unbounded Self-adjoint Operators on Hilbert Space}}, volume 265
  of {\em Graduate Texts in Mathematics}.
\newblock Springer-Verlag, Berlin, 2012.

\bibitem[Tay96]{Tay96}
M.\ Taylor.
\newblock {\em Partial Differential Equations II}, volume 116.
\newblock Springer-Verlag, Berlin, 1996.

\bibitem[Ven60]{Ven60}
A.D. Venttsel'.
\newblock {On boundary conditions for multidimensional diffusion processes.}
\newblock {\em Theor. Probab. Appl.}, 4:164--177, 1960.

\bibitem[VV11]{VazVit11}
J.L. V{\'a}zquez and E.~Vitillaro.
\newblock Heat equation with dynamical boundary conditions of
  reactive--diffusive type.
\newblock {\em J.\ Differ.\ Equ.}, 250:2143--2161, 2011.

\bibitem[Woe00]{Woe00}
W.~Woess.
\newblock {\em {Random Walks on Infinite Graphs and Groups}}, volume 138 of
  {\em Cambridge Tracts Math.}
\newblock Cambridge Univ.\ Press, Cambridge, 2000.

\bibitem[Zem11]{Zem11}
P.~Zem{\'a}nek.
\newblock {Krein-von Neumann and Friedrichs extensions for second order
  operators on time scales}.
\newblock {\em Int. J. Dynamical Systems and Differential Equations},
  3:132--144, 2011.

\end{thebibliography}

\end{document}